\newtheorem{theorem}{Theorem}[section]
\newtheorem{lemma}[theorem]{Lemma}
\newtheorem{proposition}[theorem]{Proposition}
\newtheorem{corollary}[theorem]{Corollary}
\theoremstyle{definition}
\theoremstyle{remark}
\newtheorem{remark}[theorem]{Remark}
\numberwithin{equation}{section}
\begin{document}

\title[Solutions of a system of operator equations ]{Solutions of the system of operator equations $BXA=B=AXB$ via $*$-order}

\author[M. Vosough, M.S. Moslehian]{Mehdi Vosough and Mohammad Sal Moslehian}
\address{Department of Pure Mathematics, Ferdowsi University of Mashhad, P. O. Box 1159, Mashhad 91775, Iran.}
\email{vosough.mehdi@yahoo.com}
\address{Department of Pure Mathematics,  Center Of Excellence in Analysis on Algebraic Structures (CEAAS), Ferdowsi University of Mashhad, P. O. Box 1159, Mashhad 91775, Iran.}
\email{moslehian@um.ac.ir and moslehian@member.ams.org}

\keywords{$*$-order; Moore--Penrose inverse; matrix equation; operator equation.}
\subjclass[2010]{ 15A24, 15B48, 47A62, 46L05.}

\begin{abstract}
In this paper, we establish some necessary and sufficient conditions for the existence of solutions to the system of operator equations $ BXA=B=AXB $ in the setting of bounded linear operators on a Hilbert space, where the unknown operator $X$ is called the inverse of $A$ along $B$. After that, under some mild conditions we prove that an operator $X$ is a solution of $ BXA=B=AXB $ if and only if $B \stackrel{*}{ \leq} AXA$, where the $*$-order $C\stackrel{*}{ \leq} D$ means $CC^*=DC^*, C^*C=C^*D$. Moreover we present the general solution of the equation above. Finally, we present some characterizations of $C \stackrel{*}{ \leq} D$ via other operator equations.
\end{abstract}

%%%%%%%%%%%%%%%%%%%%%%%%%%%%%%%%%%%%%%%
\maketitle
\section{Introduction and preliminaries} \label{intro-sec}

 Throughout the paper, $ \mathscr{H} $ and $\mathscr{K} $ are complex Hilbert spaces. We denote the space of all bounded linear operators from $ \mathscr{H} $ into $ \mathscr{K} $ by $\mathbb{B}(\mathscr{H}, \mathscr{K})$, and write $\mathbb{B}(\mathscr{H})$ when $ \mathscr{H}= \mathscr{K}.$
 Recall that an operator $ A\in\mathbb{B}(\mathscr{H})$ is positive if $ \langle Ax,x\rangle \geq0 $ for all $ x\in\mathscr{H} $ and then we write $A\geq 0$. We shall write $A>0 $ if $ A $ is positive and invertible. An operator $ A\in\mathbb{B}(\mathscr{H})$ is a generalized projection if $ A^2=A^*$. Let $ \mathscr{S}(\mathscr{H}), \mathscr{Q}(\mathscr{H}), \mathscr{OP}(\mathscr{H}), \mathscr{GP}(\mathscr{H})$ be the set of all self-adjoint operators on $ \mathscr{H}, $ the set of all idempotents, the set of orthogonal projections and the set of all generalized projections on $ \mathscr{H}, $ respectively. For $ A\in \mathbb{B}(\mathscr{H}, \mathscr{K})$, let $\mathscr{R}(A)$ and $\mathscr{N}(A)$ be the range and the null space of $ A, $ respectively. The projection corresponding to a closed subspace $\mathscr{M} $ of $ \mathscr{H} $ is denoted by $ P_{\mathscr{M}}$. The symbol $ A^{-} $ stands for an arbitrary generalized inner inverse of A, that is, an operator $ A^{-} $
satisfying $AA^{-}A=A$. The Moore--Penrose inverse of a closed range operator $A$ is the unique operator $ A^\dagger \in\mathbb{B}(\mathscr{H})$ satisfying the following equations
\begin{eqnarray*}
 AA^\dagger A=A, \qquad A^\dagger AA^\dagger=A^\dagger, \qquad (AA^\dagger)^*=AA^\dagger, \qquad (A^\dagger A)^*=A^\dagger A.
\end{eqnarray*}
Then, $A^*AA^\dagger=A^*=A^\dagger AA^*$ and we have the following properties
\begin{eqnarray}\label{m1}
\mathscr{R}(A^\dagger)=\mathscr{R}(A^*)=\mathscr{R}(A^\dagger A)=\mathscr{R}(A^*A), \qquad \mathscr{N}(A^\dagger)=\mathscr{N}(A^*)=\mathscr{N}(AA^\dagger), \nonumber \\
\mathscr{R}(A)=\mathscr{R}(A A^\dagger)=\mathscr{R}(AA^*), \qquad P_{\mathscr{R}(A)}=AA^\dagger {\rm ~and~} P_{\overline{\mathscr{R}(A^*)}}=A^\dagger A.
\end{eqnarray}
For $ A, B\in\mathscr{S}(\mathscr{H})$, $A\leq B $ means $ B-A \geq 0 $. The order $\leq $ is said to be the L\"{o}wner order on $ \mathscr{S}(\mathscr{H})$. If there exists $ C\in \mathscr{S}(\mathscr{H})$ such that $ AC=0 $ and $ A+C=B $, then we write $A\preceq B$. The order $\preceq $ is said to be the logic order on $ \mathscr{S}(\mathscr{H})$.
 For $ A, B\in\mathbb{B}(\mathscr{H}),$ let $ A\stackrel{*}{ \leq}B $ mean\\
 \begin{eqnarray}\label{m2}
 AA^*=BA^*, \quad A^*A=A^*B.
 \end{eqnarray}
It is known that, for $ A,B\in\mathscr{S}(\mathscr{H}), A\preceq B $ if and only if $ A\stackrel{*}{\leq}B $; see \cite{Deng}. We denote by $ A\stackrel{*}{\wedge}B $ the infimum (or the greatest lower bound) of $A$ and $B$ over the $*- $ order and $A\stackrel{*}{\vee}B $ the supremum (or the least upper bound) of $A$ and $B$ over the $*- $ order, if they exist; cf. \cite{Zhang}.\\
 It is known that if $ A\in \mathbb{B}(\mathscr{H},\mathscr{K})$ has closed range, then by considering
 \begin{eqnarray*}
 \mathscr{H}=\mathscr{R}(A^*)\oplus \mathscr{N}(A) {\rm ~and~} \mathscr{K}=\mathscr{R}(A)\oplus \mathscr{N}(A^*)
 \end{eqnarray*}
 we can write\\
 \begin{eqnarray}\label{m3}
 A=\left [  \begin{array}{cc}
 A_{1} & 0\\
 0 & 0
 \end{array}  \right]:
 \left [  \begin{array}{cc}
 \mathscr{R}(A^*)\\
 \mathscr{N}(A)
 \end{array}  \right]
 \rightarrow
 \left [  \begin{array}{cc}
 \mathscr{R}(A)\\
 \mathscr{N}(A^*)
 \end{array}  \right],
 \end{eqnarray}
 where $A_{1}: \mathscr{R}(A^*)\rightarrow \mathscr{R}(A)$ is invertible; see \cite[Lemma 2.1]{Djordjevic}.\\
 Therefore, the Moore--Penrose generalized inverse of $A$ can be represented as
\begin{eqnarray}
 A^\dagger = \left [  \begin{array}{cc}\label{m4}
 A_{1}^{-1} & 0 \\
 0 & 0
 \end{array}  \right]:
 \left [  \begin{array}{cc}
 \mathscr{R}(A) \\
 \mathscr{N}(A^*)
 \end{array}  \right] \rightarrow
 \left [  \begin{array}{cc}
 \mathscr{R}(A^*) \\
 \mathscr{N}(A)
 \end{array}  \right].
 \end{eqnarray}
Many results have been obtained on the solvability of equations for matrices and operators on Hilbert spaces and Hilbert $ C^*- $ modules. In 1976, Mitra \cite{Mitra} considered the matrix equations $ AX=B, AXB=C $ and the system of linear equations $ AX=C, XB=D$. He got the necessary and sufficient conditions for existence and expressions of general Hermitian solutions. In 1966, the celebrated Douglas Lemma was established in \cite{Douglas}. It gives some conditions for the existence of a solution to the equation $ AX=B $ for operators on a Hilbert space. Using the generalized inverses of operators, in 2007, Daji\'{c} and Koliha \cite{Koliha} got the existence of the common Hermitian and positive solutions to the system $ AX=C, XB=D $ for operators acting on a Hilbert space. In 2008, Xu \cite{Xu} extended these results to the adjointable operators. Several general operator equations and systems in some general settings such as Hilbert $C^*$-modules have been studied by some mathematicians; see, e.g., \cite{Farid, Wang1, Moslehian, Wang2}.\\
The matrix equation $ AXB=C $ is consistent if and only if $ AA^{-}CB^{-}B=C $ for some $A^{-}, B^{-}$, and the general solution is $ X=A^{-}CB^{-}+Y-A^{-}AYBB^{-} $, where $Y$ is an arbitrary matrix; see \cite{Mitra}. In 2010, Gonzalez \cite{Gonzalez} got some necessary and sufficient conditions for existence of a solution to the equation $ AXB=C $ for operators on a Hilbert space.\\
Let $ A, B $ or $C$ have closed range. Then, the operator equation $ AXB=C $ is solvable if and only if $ \mathscr{R}(C)\subseteq \mathscr{R}(A)$ and $\mathscr{R}(C^*)\subseteq \mathscr{R}(B^*)$; see \cite[Theorem 3.1]{Gonzalez}. Therefore, if $A$ or $C$ has closed range, then the equation $ AXC=C $ is solvable if and only if $ \mathscr{R}(C)\subseteq \mathscr{R}(A) $, and $CXA=C $ is solvable if and only if $ \mathscr{R}(C^*)\subseteq \mathscr{R}(A^*)$. Deng \cite{Deng1} investigated the equation $ CAX=C=XAC $, which is essentially different from ours. In this paper, we first characterize the existence of solutions of the system of operator equations  $ BXA=B=AXB $ by means of $ *- $ order. After that, we generalize the solutions to the system of operator equations $ BXA=B=AXB $ in a new fashion.

\section{The existence of solutions of the system $BXA=B=AXB$}

We start our work with the celebrated Douglas lemma.
 \begin{lemma}[Douglas Lemma]\cite{Douglas}\label{Douglas}
 Let $ A,C\in \mathbb{B}(\mathscr{H})$. Then, the following statements are equivalent:\\
 \begin{enumerate}
\item[(a)] $\mathscr{R}(C)\subseteq\mathscr{R}(A)$.\\
\item[(b)] There exists $ X\in \mathbb{B}(\mathscr{H})$ such that $AX=C$.\\
 \item[(c)] There exists a positive number $ \lambda $ such that $ CC^*\leq \lambda^2AA^*$.\\
 \end{enumerate}
If one of these conditions holds, then there exists a unique solution $ \widetilde{X}\in\mathbb{B}(\mathscr{H})$ of the equation $ AX=C $ such that $ \mathscr{R}(\widetilde{X})\subseteq \overline{\mathscr{R}(A^*)} $ and $\mathscr{N}(\widetilde{X})=\mathscr{N}(C)$.
\end{lemma}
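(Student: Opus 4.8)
The plan is to prove the cycle $(b)\Rightarrow(a)$, $(b)\Rightarrow(c)$, $(c)\Rightarrow(b)$, and $(a)\Rightarrow(b)$, which together give the equivalence of $(a)$, $(b)$, $(c)$, and then to read off the uniqueness clause from the construction used for $(a)\Rightarrow(b)$.

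The implications leaving $(b)$ are one-line observations. If $AX=C$, then $\mathscr{R}(C)=\mathscr{R}(AX)\subseteq\mathscr{R}(A)$, which is $(a)$; and since $XX^*\le\|X\|^2 I$ we get $CC^*=AXX^*A^*\le\|X\|^2 AA^*$, which is $(c)$ with $\lambda=\|X\|$.

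The heart of the argument is $(c)\Rightarrow(b)$. Assuming $CC^*\le\lambda^2 AA^*$, I would first record the pointwise inequality $\|C^*h\|^2=\langle CC^*h,h\rangle\le\lambda^2\langle AA^*h,h\rangle=\lambda^2\|A^*h\|^2$ valid for all $h\in\mathscr{H}$. This single estimate does two things at once: it shows that the rule $A^*h\mapsto C^*h$ is well defined on $\mathscr{R}(A^*)$ (if $A^*h=A^*h'$ then $C^*(h-h')=0$), and that it is bounded there with norm at most $\lambda$. Extending this map continuously to $\overline{\mathscr{R}(A^*)}$ and by $0$ on $\mathscr{N}(A)=(\overline{\mathscr{R}(A^*)})^\perp$, and using $\mathscr{H}=\overline{\mathscr{R}(A^*)}\oplus\mathscr{N}(A)$, produces $D\in\mathbb{B}(\mathscr{H})$ with $\|D\|\le\lambda$ and $DA^*=C^*$; taking adjoints gives $AD^*=C$, so $X=D^*$ solves the equation. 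For $(a)\Rightarrow(b)$ I would argue directly: for each $h$, since $Ch\in\mathscr{R}(A)$ the equation $Ag=Ch$ has a solution, and the coset of solutions contains exactly one element $g$ of $\mathscr{N}(A)^\perp$; set $\widetilde{X}h:=g$. Linearity is clear from this uniqueness, and the graph of $\widetilde{X}$ is closed (if $h_n\to h$ and $\widetilde{X}h_n\to k$, then $Ak=\lim A\widetilde{X}h_n=\lim Ch_n=Ch$ and $k\in\mathscr{N}(A)^\perp$ because this subspace is closed, so $k=\widetilde{X}h$), whence $\widetilde{X}$ is bounded by the closed graph theorem.

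The operator $\widetilde{X}$ just built settles the uniqueness clause. By construction $\mathscr{R}(\widetilde{X})\subseteq\mathscr{N}(A)^\perp=\overline{\mathscr{R}(A^*)}$, and $\widetilde{X}h=0$ precisely when the minimal-norm solution of $Ag=Ch$ is $0$, i.e.\ precisely when $Ch=0$, so $\mathscr{N}(\widetilde{X})=\mathscr{N}(C)$. If $X_1,X_2$ both solve $AX=C$ with ranges in $\overline{\mathscr{R}(A^*)}$, then $A(X_1-X_2)=0$ forces $\mathscr{R}(X_1-X_2)\subseteq\mathscr{N}(A)\cap\overline{\mathscr{R}(A^*)}=\{0\}$, so $X_1=X_2$. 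I expect the only real obstacle to be the $(c)\Rightarrow(b)$ step — in particular, verifying that the densely defined assignment $A^*h\mapsto C^*h$ is genuinely well defined and that its extension, glued to $0$ on $\mathscr{N}(A)$, continues to satisfy $DA^*=C^*$ everywhere; all the remaining implications are either immediate computations or a routine appeal to the closed graph theorem.
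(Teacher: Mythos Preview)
Your argument is correct and is essentially the classical proof of Douglas' lemma: the key step $(c)\Rightarrow(b)$ via the well-defined bounded map $A^*h\mapsto C^*h$ on $\mathscr{R}(A^*)$, extended by continuity and by zero on $\mathscr{N}(A)$, is exactly the standard construction, and your treatment of the uniqueness clause (range in $\overline{\mathscr{R}(A^*)}$ forces uniqueness via $\mathscr{N}(A)\cap\overline{\mathscr{R}(A^*)}=\{0\}$) is clean. Note, however, that the paper does not supply its own proof of this lemma at all: it is quoted as a known result with a citation to Douglas' original 1966 paper, so there is no in-paper proof to compare against. Your write-up would serve as a self-contained replacement for that citation.
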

\begin{lemma}\label{cc}
Let $ A, B\in\mathbb{B}(\mathscr{H})$. If $ \mathscr{R}(B)\subseteq \mathscr{R}(A)$ and $\mathscr{R}(B^*)\subseteq \mathscr{R}(A^*)$, then $ B=B_{1}\bigoplus 0, $ where $ B_{1}\in\mathbb{B}(\overline{\mathscr{R}(A^*)}, \overline{\mathscr{R}(A)})$.
\end{lemma}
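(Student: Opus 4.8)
The plan is to read off the $2\times 2$ operator matrix of $B$ with respect to the two canonical orthogonal decompositions
$\mathscr{H}=\overline{\mathscr{R}(A^*)}\oplus\mathscr{N}(A)$ for the domain and $\mathscr{H}=\overline{\mathscr{R}(A)}\oplus\mathscr{N}(A^*)$ for the codomain. These hold for every $A\in\mathbb{B}(\mathscr{H})$ (no closed-range hypothesis is needed), and the two inclusions in the statement will be shown to force the matrix of $B$ to be diagonal with only its $(1,1)$-entry possibly nonzero.

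First I would use $\mathscr{R}(B)\subseteq\mathscr{R}(A)\subseteq\overline{\mathscr{R}(A)}$: since $B$ maps all of $\mathscr{H}$ into $\overline{\mathscr{R}(A)}$, the component of $Bx$ in $\mathscr{N}(A^*)=\overline{\mathscr{R}(A)}^{\perp}$ vanishes for every $x$, i.e. $P_{\mathscr{N}(A^*)}B=0$; in block form this says the second (row) block of $B$ is zero.

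Next, from $\mathscr{R}(B^*)\subseteq\mathscr{R}(A^*)$, taking orthogonal complements gives $\mathscr{N}(A)=\overline{\mathscr{R}(A^*)}^{\perp}\subseteq\mathscr{R}(B^*)^{\perp}=\mathscr{N}(B)$, so $B$ annihilates $\mathscr{N}(A)$ and hence the second (column) block of $B$ is zero. (Equivalently, for $x\in\mathscr{N}(A)$ and any $y$ one has $\langle Bx,y\rangle=\langle x,B^*y\rangle=0$ because $B^*y\in\mathscr{R}(B^*)\subseteq\mathscr{R}(A^*)\perp\mathscr{N}(A)$, so $Bx=0$.) Combining the two vanishing statements, the matrix of $B$ relative to the chosen decompositions is $\left[\begin{smallmatrix}B_1 & 0\\ 0 & 0\end{smallmatrix}\right]$, where $B_1:=P_{\overline{\mathscr{R}(A)}}\,B|_{\overline{\mathscr{R}(A^*)}}\in\mathbb{B}\big(\overline{\mathscr{R}(A^*)},\overline{\mathscr{R}(A)}\big)$; boundedness of $B_1$ is immediate since it is a compression/restriction of the bounded operator $B$. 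This is exactly the asserted form $B=B_1\oplus 0$.

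I do not expect a genuine obstacle here; the argument is essentially a bookkeeping exercise with orthogonal projections. The only points deserving care are (i) to work with the \emph{closures} $\overline{\mathscr{R}(A^*)}$ and $\overline{\mathscr{R}(A)}$ so that the direct sums really are orthogonal decompositions of $\mathscr{H}$, and (ii) to note explicitly that neither $A$ nor $B$ is assumed to have closed range. As an alternative one could invoke the Douglas Lemma (Lemma \ref{Douglas}) to produce bounded $X,Z$ with $B=AX$ and $B^*=A^*Z$ and argue from there, but the direct decomposition argument above is shorter and more transparent.
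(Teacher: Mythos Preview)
Your proof is correct and arrives at exactly the same two key identities as the paper, namely $P_{\mathscr{N}(A^*)}B=0$ and $BP_{\mathscr{N}(A)}=0$, but by a more direct route. The paper invokes the Douglas Lemma to factor $B=AC$ with $\mathscr{N}(C)=\mathscr{N}(B)$, then uses $\overline{\mathscr{R}(C^*)}=\overline{\mathscr{R}(B^*)}\subseteq\overline{\mathscr{R}(A^*)}$ to deduce $CP_{\mathscr{N}(A)}=0$ and hence $BP_{\mathscr{N}(A)}=0$; a symmetric application yields $P_{\mathscr{N}(A^*)}B=0$. You bypass Douglas entirely: the inclusion $\mathscr{R}(B)\subseteq\overline{\mathscr{R}(A)}$ immediately gives $P_{\mathscr{N}(A^*)}B=0$, and $\mathscr{R}(B^*)\subseteq\mathscr{R}(A^*)$ gives $\mathscr{N}(A)\subseteq\mathscr{N}(B)$ by orthogonal complementation, hence $BP_{\mathscr{N}(A)}=0$. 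Your argument is shorter, avoids an external lemma, and makes transparent that the result is pure orthogonal-decomposition bookkeeping; the paper's detour through Douglas is not needed here. Amusingly, you anticipated the paper's approach in your closing remark and correctly judged the direct argument preferable.
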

\begin{proof}
Let $ A, B $ be operators from the decomposition $ \mathscr{H}=\overline{\mathscr{R}(A^*)}\bigoplus \mathscr{N}(A)$ into the decomposition $ \mathscr{H}=\overline{\mathscr{R}(A)}\bigoplus \mathscr{N}(A^*)$. If $ \mathscr{R}(B)\subseteq \mathscr{R}(A)$, then, by Lemma \ref{Douglas}, there exists $ C\in\mathbb{B}(\mathscr{H})$ such that $ B=AC $ and $ \mathscr{N}(C)=\mathscr{N}(B)$. Since $\mathscr{R}(B^*)\subseteq \mathscr{R}(A^*)$, so $ \mathscr{R}(C^*)\subseteq\overline{\mathscr{R}(C^*)}=\overline{\mathscr{R}(B^*)}\subseteq \overline{\mathscr{R}(A^*)}=\mathscr{N}(P_{\mathscr{N}(A)})$. Hence, $P_{\mathscr{N}(A)}C^*=0$ and so $ CP_{\mathscr{N}(A)}=0 $. It follows from $ \mathscr{N}(C)=\mathscr{N}(B)$ that $ BP_{\mathscr{N}(A)}=0$. \\
If $ \mathscr{R}(B^*)\subseteq \mathscr{R}(A^*)$, then a similar reasoning shows that $ P_{\mathscr{N}(A^*)}B=0$. Therefore,
$P_{\overline{\mathscr{R}(A)}}BP_{\mathscr{N}(A)}=P_{\mathscr{N}(A^*)}BP_{\overline{\mathscr{R}(A^*)}}=P_{\mathscr{N}(A^*)}BP_{\mathscr{N}(A)}=0 $. Hence, $B=B_{1}\bigoplus 0$, where $ B_{1}=P_{\overline{\mathscr{R}(A)}}BP_{\overline{\mathscr{R}(A^*)}} $.
\end{proof}
 \begin{theorem}\label{CXA=C=AXC}
Let $ A\in \mathbb{B}(\mathscr{H})$ and $B\in\mathscr{S}(\mathscr{H})$. If $A$ has closed range, then the following statements are equivalent:
\begin{enumerate}
\item[(1)] The system of operator equations $ BXA=B=AXB $ is solvable;\\
\item[(2)] $AA^\dagger BA^\dagger A=B$;\\
\item[(3)] $ \mathscr{R}(B)\subseteq \mathscr{R}(A)$ and $\mathscr{R}(B)\subseteq \mathscr{R}(A^*)$.\\
 \end{enumerate}
 \end{theorem}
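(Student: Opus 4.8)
The plan is to prove the cycle of implications $(1)\Rightarrow(3)\Rightarrow(2)\Rightarrow(1)$. Throughout, I would use that since $A$ has closed range we have $AA^\dagger=P_{\mathscr{R}(A)}$ and $A^\dagger A=P_{\mathscr{R}(A^*)}$ (both $\mathscr{R}(A)$ and $\mathscr{R}(A^*)$ being closed), together with the hypothesis $B=B^*$, which gives $\mathscr{R}(B)=\mathscr{R}(B^*)$ and lets me pass freely between an identity and its adjoint. Note also that since $B$ need not have closed range, I cannot invoke $B^\dagger$; every step that produces an inclusion into $\mathscr{R}(A^*)$ must be routed through $B=B^*$ and the selfadjointness of the projections $AA^\dagger$ and $A^\dagger A$.

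For $(1)\Rightarrow(3)$: if $X$ satisfies $BXA=B=AXB$, then from $B=AXB$ one reads off $\mathscr{R}(B)=\mathscr{R}(AXB)\subseteq\mathscr{R}(A)$, while taking adjoints in $BXA=B$ and using $B^*=B$ gives $A^*X^*B=B$, hence $\mathscr{R}(B)\subseteq\mathscr{R}(A^*)$; this is exactly $(3)$. For $(3)\Rightarrow(2)$: the inclusion $\mathscr{R}(B)\subseteq\mathscr{R}(A)=\mathscr{R}(AA^\dagger)$ forces $AA^\dagger B=B$ (by Lemma \ref{Douglas} and idempotency of $AA^\dagger$); the inclusion $\mathscr{R}(B)\subseteq\mathscr{R}(A^*)$ together with $B=B^*$ gives $\mathscr{R}(B^*)\subseteq\mathscr{R}(A^*)=\mathscr{R}(A^\dagger A)$, whence $A^\dagger A B^*=B^*$ and, on taking adjoints, $BA^\dagger A=B$. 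Combining, $AA^\dagger BA^\dagger A=(AA^\dagger B)A^\dagger A=BA^\dagger A=B$, which is $(2)$.

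For $(2)\Rightarrow(1)$ I would simply check that $X=A^\dagger$ solves the system. From $(2)$ one gets $\mathscr{R}(B)\subseteq\mathscr{R}(AA^\dagger)=\mathscr{R}(A)$, hence $AXB=AA^\dagger B=B$; and taking adjoints in $(2)$ with $B^*=B$ yields $B=A^\dagger A B AA^\dagger$, so $\mathscr{R}(B)\subseteq\mathscr{R}(A^\dagger A)=\mathscr{R}(A^*)$, which as above gives $BXA=BA^\dagger A=B$. Thus the system is solvable. Alternatively, under $(3)$ one may invoke Lemma \ref{cc} to write $B=B_{1}\bigoplus 0$ relative to $\mathscr{H}=\overline{\mathscr{R}(A^*)}\bigoplus\mathscr{N}(A)$ and $\mathscr{H}=\overline{\mathscr{R}(A)}\bigoplus\mathscr{N}(A^*)$, and $A=A_{1}\bigoplus 0$ with $A_{1}$ invertible as in \eqref{m3}; then the block operator $X$ with $(1,1)$-entry $A_{1}^{-1}$ (and arbitrary elsewhere) manifestly satisfies $BXA=B=AXB$, recovering $A^\dagger$ as the canonical choice.

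I do not expect a genuine obstacle here: each implication is a one- or two-line manipulation. The only point demanding care is the adjoint bookkeeping in $(2)\Rightarrow(1)$ and $(3)\Rightarrow(2)$ — one must consistently exploit $B=B^*$ and $(AA^\dagger)^*=AA^\dagger$, $(A^\dagger A)^*=A^\dagger A$ to move inclusions between $\mathscr{R}(A)$ and $\mathscr{R}(A^*)$ — and, in the block-form alternative, verifying that the off-diagonal blocks of $B$ vanish, which is precisely the content of Lemma \ref{cc}. Once one observes that $X=A^\dagger$ is an explicit solution whenever the range conditions hold, the substantive implication $(2)\Rightarrow(1)$ becomes essentially immediate.
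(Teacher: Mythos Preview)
Your proof is correct. The logical content is the same circle of ideas as the paper's --- range inclusions, the projection identities $AA^\dagger=P_{\mathscr{R}(A)}$ and $A^\dagger A=P_{\mathscr{R}(A^*)}$, and the observation that $X=A^\dagger$ solves the system once the range conditions hold --- but you run the cycle in the opposite direction, $(1)\Rightarrow(3)\Rightarrow(2)\Rightarrow(1)$, whereas the paper proves $(1)\Rightarrow(2)\Rightarrow(3)\Rightarrow(1)$. Your route is a bit more economical: for $(1)\Rightarrow(3)$ you read off the range inclusions directly from $B=AXB$ and $B=(BXA)^*=A^*X^*B$, while the paper's $(1)\Rightarrow(2)$ invokes the Douglas lemma to produce explicit factorizations $B=CA^\dagger A$ and $B=AA^\dagger\widetilde{C}$ before computing $AA^\dagger BA^\dagger A$. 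Likewise, your $(2)\Rightarrow(1)$ verifies $X=A^\dagger$ works using only the projection identities, whereas the paper's $(3)\Rightarrow(1)$ passes through the block decomposition of Lemma~\ref{cc} (which you correctly mention as an alternative). Both arguments are short; yours simply avoids one layer of factorization machinery.
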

 \begin{proof}
 $ (1) \longrightarrow (2):$ Using (\ref{m1}) and $B=BXA,$ we get that $ \mathscr{R}(B)\subseteq\mathscr{R}(A^*)= \mathscr{R}(A^\dagger A)$. Hence, by Lemma \ref{Douglas}, there exists $ C^*\in \mathbb{B}(\mathscr{H})$ such that $ B=A^\dagger AC^*$. Hence, $ B=CA^\dagger A $. Applying (\ref{m1}) and $AXB =B, $ we derive that $ \mathscr{R}(B)\subseteq\mathscr{R}(A)=\mathscr{R}(AA^\dagger)$. Thus, by Lemma \ref{Douglas}, there exists $ \widetilde{C}\in \mathbb{B}(\mathscr{H})$ such that $ B=AA^\dagger \widetilde{C}. $ It follows that
 \begin{eqnarray*}
 AA^\dagger BA^\dagger A=AA^\dagger (AA^\dagger \widetilde{C})A^\dagger A=AA^\dagger \widetilde{C}A^\dagger A=BA^\dagger A=(CA^\dagger A)A^\dagger A=CA^\dagger A=B.
 \end{eqnarray*}
(2) $\longrightarrow$ (3): Let $ AA^\dagger BA^\dagger A=B$. Then, $ \mathscr{R}(B)\subseteq \mathscr{R}(A)$. It follows from $ B=B^*=(AA^\dagger BA^\dagger A)^*=A^\dagger A BA A^\dagger $ and (\ref{m1}) that $ \mathscr{R}(B)\subseteq \mathscr{R}(A^\dagger)=\mathscr{R}(A^*)$.\\
(3) $\longrightarrow$ (1): Let $ \mathscr{R}(B)\subseteq \mathscr{R}(A)$ and $\mathscr{R}(B)\subseteq\mathscr{R}(A^*)$. Upon applying Lemma \ref{cc}, $ B=B_{1}\bigoplus 0$, where $ B_{1}=P_{\overline{\mathscr{R}(A)}}BP_{\overline{\mathscr{R}(A^*)}} $. Since $A$ has closed rang, so by using (1.3) and (1.4) we have
\begin{eqnarray*}
 A=\left [  \begin{array}{cc}
 A_{1} & 0\\
 0 & 0
 \end{array}  \right] {\rm~ and~ }
 A^\dagger = \left [  \begin{array}{cc}
 A_{1}^{-1} & 0 \\
 0 & 0
 \end{array}  \right].
\end{eqnarray*}
Hence, $AA^\dagger B=B $ and $BA^\dagger A=B$. Thus $ X=A^\dagger $ is a solution of the system $ BXA=B=AXB$.
\end{proof}
\begin{proposition}\label{main}
Let $ A, B, X\in\mathbb{B}(\mathscr{H})$. Then,\\
\begin{eqnarray*}
\mathscr{R}(A)\subseteq \mathscr{R}(B), \quad \mathscr{N}(B)\subseteq \mathscr{N}(A) {\rm ~and~} BXA=B=AXB
 \end{eqnarray*} if and only if
 \begin{eqnarray*}
\mathscr{N}(B)=\mathscr{N}(A), \quad \mathscr{R}(B)=\mathscr{R}(A) {\rm ~and~} AXA=A.
\end{eqnarray*}
\end{proposition}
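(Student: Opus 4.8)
The plan is to prove both directions by elementary bookkeeping with ranges and null spaces, using only that $\mathscr{R}(UV)\subseteq\mathscr{R}(U)$, that $\overline{\mathscr{R}(T^{*})}=\mathscr{N}(T)^{\perp}$, and the remark that an identity $T(I-S)=0$ is equivalent to $\mathscr{R}(I-S)\subseteq\mathscr{N}(T)$. Neither a closed-range hypothesis nor the Douglas lemma will be needed.

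For the forward implication, suppose $\mathscr{R}(A)\subseteq\mathscr{R}(B)$, $\mathscr{N}(B)\subseteq\mathscr{N}(A)$ and $BXA=B=AXB$. I would first upgrade the two inclusions to equalities. Writing $B=AXB=A(XB)$ gives $\mathscr{R}(B)\subseteq\mathscr{R}(A)$, hence $\mathscr{R}(A)=\mathscr{R}(B)$. Taking adjoints in $B=BXA$ gives $B^{*}=A^{*}(X^{*}B^{*})$, so $\mathscr{R}(B^{*})\subseteq\mathscr{R}(A^{*})$; passing to closures and then to orthogonal complements turns this into $\mathscr{N}(A)\subseteq\mathscr{N}(B)$, hence $\mathscr{N}(A)=\mathscr{N}(B)$. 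Finally, $AXA=A$ falls out of $B=BXA$ together with $\mathscr{N}(B)\subseteq\mathscr{N}(A)$: the equation says $B(I-XA)=0$, so $\mathscr{R}(I-XA)\subseteq\mathscr{N}(B)\subseteq\mathscr{N}(A)$, and applying $A$ yields $A(I-XA)=0$, i.e. $AXA=A$.

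For the converse, suppose $\mathscr{N}(B)=\mathscr{N}(A)$, $\mathscr{R}(B)=\mathscr{R}(A)$ and $AXA=A$. The inclusions $\mathscr{R}(A)\subseteq\mathscr{R}(B)$ and $\mathscr{N}(B)\subseteq\mathscr{N}(A)$ are trivial, so only $BXA=B=AXB$ has to be checked. To obtain $BXA=B$, rewrite $AXA=A$ as $A(I-XA)=0$, so $\mathscr{R}(I-XA)\subseteq\mathscr{N}(A)=\mathscr{N}(B)$, which means $B(I-XA)=0$. To obtain $AXB=B$, rewrite $AXA=A$ as $(I-AX)A=0$, so $\mathscr{R}(A)\subseteq\mathscr{N}(I-AX)$; since $\mathscr{N}(I-AX)$ is closed and $\mathscr{R}(B)=\mathscr{R}(A)$, also $\mathscr{R}(B)\subseteq\mathscr{N}(I-AX)$, i.e. $(I-AX)B=0$.

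I do not anticipate a genuine obstacle; the only delicate point is the adjoint/null-space bookkeeping in the forward direction, where one must remember that $\mathscr{R}(B^{*})\subseteq\mathscr{R}(A^{*})$ yields $\mathscr{N}(A)\subseteq\mathscr{N}(B)$ only after taking closures — which is precisely what is available and what is needed — together with the dual remark that it is the closedness of $\mathscr{N}(I-AX)$ that permits using the range equality $\mathscr{R}(B)=\mathscr{R}(A)$ itself rather than only its closure.
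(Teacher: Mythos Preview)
Your proof is correct and follows essentially the same elementary range/null-space bookkeeping as the paper. The one minor difference is that where you pass to adjoints and orthogonal complements to extract $\mathscr{N}(A)\subseteq\mathscr{N}(B)$ from $B=BXA$, the paper simply reads this inclusion off directly (if $Ax=0$ then $Bx=BXAx=0$), which sidesteps the closure issue you flagged as delicate; otherwise the arguments coincide.
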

\begin{proof}
$ (\Longrightarrow): $ Suppose that $\mathscr{R}(A)\subseteq \mathscr{R}(B), \mathscr{N}(B)\subseteq \mathscr{N}(A)$ and $BXA=B=AXB $. It follows from $ BXA=B $ and $\mathscr{N}(B)\subseteq \mathscr{N}(A)$ that $ \mathscr{N}(A)\subseteq\mathscr{N}(B) \subseteq \mathscr{N}(A)$. Hence,
 $ \mathscr{N}(A)=\mathscr{N}(B)$. It follows from $ AXB=B $ and $\mathscr{R}(A)\subseteq\mathscr{R}(B)$ that $ \mathscr{R}(A)\subseteq\mathscr{R}(B)\subseteq\mathscr{R}(A)$. Therefore, $ \mathscr{R}(A)=\mathscr{R}(B)$. Moreover, $ (I-AX)B=0 $ and $\mathscr{R}(A)\subseteq\mathscr{R}(B)$ Hence, we derive that $ (I-AX)A=0 $. So, $ AXA=A $.\\
$ (\Longleftarrow)$: Suppose that $\mathscr{N}(B)=\mathscr{N}(A), \mathscr{R}(B)=\mathscr{R}(A)$ and $AXA=A$. Hence,
\begin{eqnarray*}
(I-AX)A=0 \Longrightarrow \mathscr{R}(A) \subseteq \mathscr{N}(I-AX) \Longrightarrow \mathscr{R}(B) \subseteq \mathscr{N}(I-AX) \Longrightarrow B=AXB,\\
A(I-XA)=0 \Longrightarrow \mathscr{R}(I-XA) \subseteq \mathscr{N}(A) \Longrightarrow \mathscr{R}(I-XA) \subseteq \mathscr{N}(B) \Longrightarrow B=BXA.
\end{eqnarray*}
\end{proof}

\section{System of operator equations $ BXA=B=AXB $ via $*$-order}
 We know that $ (\mathbb{B}(\mathscr{H}), \stackrel{*}{ \leq})$ is a partially ordered set; see \cite{Antezana}. Let $ G_{1}, G_{2}\in\mathbb{B}(\mathscr{H})$ be invertible and $ G_{1}\stackrel{*}{ \leq}A, G_{2}\stackrel{*}{ \leq}A$. Then, $ G_{1}G_{1}^*=AG_{1}^* $ and $G_{2}G_{2}^*=AG_{2}^* $. Hence, we obtain $ G_{1}=G_{2}=A$. This fact leads us to consider the characterizations of $ A\stackrel{*}{ \leq}B $. Now we state the necessary and sufficient conditions in which the common $ *- $ lower or $ *- $ upper bounds of $A$ and $B$ exist.\\
We need the following essential lemmas.
\begin{lemma}\label{order} \cite[Lemma 2.1]{Fang} Let $ A, B\in\mathbb{B}(\mathscr{H})$ and $\overline{\mathscr{M}} $ denote the closure of a space $\mathscr{M} $.
\begin{enumerate}

\item[(a)] $ AA^*=BA^*\Longleftrightarrow A=BP_{\overline{\mathscr{R}(A^*)}}\Longleftrightarrow A=BQ $ for some $Q\in\mathscr{OP}(\mathscr{H})$;\\
\item[(b)] $ A^*A=A^*B\Longleftrightarrow A=P_{\overline{\mathscr{R}(A)}}B\Longleftrightarrow A=PB $ for some $P\in\mathscr{OP}(\mathscr{H})$;\\
\item[(c)] $ A\stackrel{*}{\leq} B \Longleftrightarrow B=A+P_{\mathscr{N}(A^*)}BP_{\mathscr{N}(A)}$;\\
\item[(d)] $ A\stackrel{*}{ \leq}B\Longleftrightarrow A=P_{\overline{\mathscr{R}(A)}}B=BP_{\overline{\mathscr{R}(A^*)}}=P_{\overline{\mathscr{R}(A)}}BP_{\overline{\mathscr{R}(A^*)}}$;\\
\item[(e)] $ A\stackrel{*}{ \leq}B \Longleftrightarrow A=A_{1}\bigoplus0, B=A_{1}\bigoplus B_{1} $;
\end{enumerate}
where $A_{1}\in\mathbb{B}(\overline{\mathscr{R}(A^*)}, \overline{\mathscr{R}(A)}), B_{1}\in\mathbb{B}(\mathscr{N}(A), \mathscr{N}(A^*))$ and $A\bigoplus B $ means the block matrix $\left [  \begin{array}{cc}
A&0\\
0&B
\end{array}  \right]$.
\end{lemma}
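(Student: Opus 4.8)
The plan is to prove the two "one-sided" identities (a) and (b) first by elementary range/kernel manipulations, and then to obtain (c), (d), (e) as formal consequences of them. Throughout I would use only the complement identities $\mathscr{N}(A)=\mathscr{R}(A^*)^{\perp}$ and $\mathscr{N}(A^*)=\mathscr{R}(A)^{\perp}$, never a closed-range normal form, since $A$ here is an arbitrary bounded operator.

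For (a), I would rewrite $AA^*=BA^*$ as $(A-B)A^*=0$, which is the same as $\overline{\mathscr{R}(A^*)}\subseteq\mathscr{N}(A-B)$, i.e. $(A-B)P_{\overline{\mathscr{R}(A^*)}}=0$; since $P_{\overline{\mathscr{R}(A^*)}}$ is the projection onto $\mathscr{N}(A)^{\perp}$ we have $AP_{\overline{\mathscr{R}(A^*)}}=A$, so this reduces exactly to $A=BP_{\overline{\mathscr{R}(A^*)}}$. The implication to "$A=BQ$ for some $Q\in\mathscr{OP}(\mathscr{H})$" is then immediate ($Q=P_{\overline{\mathscr{R}(A^*)}}$), and conversely, if $A=BQ$ with $Q=Q^*=Q^2$, then $A^*=QB^*$, hence $BA^*=BQB^*=BQQ^*B^*=AA^*$. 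Part (b) is the symmetric statement: from $A^*A=A^*B$ one gets $\mathscr{R}(A-B)\subseteq\mathscr{N}(A^*)=\mathscr{R}(A)^{\perp}$, so $P_{\overline{\mathscr{R}(A)}}(A-B)=0$ and $A=P_{\overline{\mathscr{R}(A)}}B$, the projection form being argued exactly as in (a).

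With (a) and (b) in hand, (d) is essentially free: $A\stackrel{*}{\leq}B$ holds iff both defining relations hold, hence iff $A=BP_{\overline{\mathscr{R}(A^*)}}$ and $A=P_{\overline{\mathscr{R}(A)}}B$, and then $P_{\overline{\mathscr{R}(A)}}BP_{\overline{\mathscr{R}(A^*)}}=AP_{\overline{\mathscr{R}(A^*)}}=A$; conversely each of the three factorizations feeds back into (a) or (b). For (c), I would substitute $P_{\mathscr{N}(A^*)}=I-P_{\overline{\mathscr{R}(A)}}$ and $P_{\mathscr{N}(A)}=I-P_{\overline{\mathscr{R}(A^*)}}$, expand $(I-P_{\overline{\mathscr{R}(A)}})B(I-P_{\overline{\mathscr{R}(A^*)}})$ into four terms, and collapse it to $B-A$ using $P_{\overline{\mathscr{R}(A)}}B=A=BP_{\overline{\mathscr{R}(A^*)}}$ and $AP_{\overline{\mathscr{R}(A^*)}}=A$ from (d); the converse follows by right-multiplying $B=A+P_{\mathscr{N}(A^*)}BP_{\mathscr{N}(A)}$ by $A^*$ and left-multiplying it by $A^*$, using $P_{\mathscr{N}(A)}A^*=0$ and $A^*P_{\mathscr{N}(A^*)}=0$.

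Finally, for (e) I would fix the orthogonal decompositions $\mathscr{H}=\overline{\mathscr{R}(A^*)}\bigoplus\mathscr{N}(A)$ on the domain and $\mathscr{H}=\overline{\mathscr{R}(A)}\bigoplus\mathscr{N}(A^*)$ on the codomain; with respect to these, $A$ automatically takes the block form $A_{1}\bigoplus 0$ with $A_{1}\in\mathbb{B}(\overline{\mathscr{R}(A^*)},\overline{\mathscr{R}(A)})$, because $A$ annihilates $\mathscr{N}(A)$ and $\mathscr{R}(A)\perp\mathscr{N}(A^*)$. Writing $B$ as a $2\times2$ block operator in the same frames, the two identities $A=P_{\overline{\mathscr{R}(A)}}B$ and $A=BP_{\overline{\mathscr{R}(A^*)}}$ from (d) read off precisely as $B_{11}=A_{1}$, $B_{12}=0$, $B_{21}=0$, i.e. $B=A_{1}\bigoplus B_{1}$ with $B_{1}=B_{22}\in\mathbb{B}(\mathscr{N}(A),\mathscr{N}(A^*))$; the converse is a one-line check obtained by inserting the block forms back into (d). I do not anticipate a genuine obstacle — the whole lemma is a bookkeeping exercise — but the one point demanding care is precisely that $A$ need not have closed range, so one must consistently work with $\overline{\mathscr{R}(A)}$ and $\overline{\mathscr{R}(A^*)}$ and resist invoking the closed-range representations \eqref{m3}--\eqref{m4}.
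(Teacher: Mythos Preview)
Your argument is correct and self-contained. Note, however, that the paper does not actually prove this lemma: it is quoted from \cite[Lemma~2.1]{Fang} with a citation in the lemma header and no proof environment following it, so there is no ``paper's own proof'' to compare against. What you have supplied is precisely the kind of elementary range/kernel verification one would expect for such a background result, and your explicit warning about working with $\overline{\mathscr{R}(A)}$ and $\overline{\mathscr{R}(A^*)}$ rather than invoking the closed-range representations~\eqref{m3}--\eqref{m4} is exactly the right point of care, since the lemma is stated for arbitrary $A\in\mathbb{B}(\mathscr{H})$.
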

The following Lemma is a version of Lemma \ref{Douglas} when the operator $ A $ has closed range.
\begin{lemma}\label{AX=C}\cite[Theorem 3.1]{Koliha}. Let $ A\in\mathbb{B}(\mathscr{H})$ have closed range. Then, the equation $ AX=C $ has a solution $ 	X\in\mathbb{B}(\mathscr{H})$ if and only if $ AA^\dagger C=C $, and this if and only if $ \mathscr{R}(C)\subseteq \mathscr{R}(A)$. In this case, the general solution is
$ X=A^\dagger C+(I-A^\dagger A)T,$ where $T\in\mathbb{B}(\mathscr{H})$ is arbitrary.\\
\end{lemma}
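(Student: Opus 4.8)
The lemma packages three conditions --- solvability of $AX=C$, the identity $AA^\dagger C=C$, and the inclusion $\mathscr{R}(C)\subseteq\mathscr{R}(A)$ --- together with a parametrization of the solution set, so the plan is to prove the cycle of implications (solvability) $\Rightarrow$ ($\mathscr{R}(C)\subseteq\mathscr{R}(A)$) $\Rightarrow$ ($AA^\dagger C=C$) $\Rightarrow$ (solvability), and then to verify the formula for the general solution. Everything rests on the identities $AA^\dagger=P_{\mathscr{R}(A)}$ and $A^\dagger A=P_{\overline{\mathscr{R}(A^*)}}$ recorded in (\ref{m1}), which are available precisely because $A$ has closed range; this hypothesis also guarantees that $A^\dagger$ is bounded and that $\mathscr{R}(A)$ and $\mathscr{R}(A^*)$ are closed.

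The first implication is immediate: if $AX=C$ for some $X\in\mathbb{B}(\mathscr{H})$, then $\mathscr{R}(C)=\mathscr{R}(AX)\subseteq\mathscr{R}(A)$. For the second, since $AA^\dagger$ is the orthogonal projection onto $\mathscr{R}(A)$ and every vector $Cx$ lies in $\mathscr{R}(C)\subseteq\mathscr{R}(A)$, it is fixed by this projection, whence $AA^\dagger C=C$. For the third, the operator $X_0:=A^\dagger C$ is bounded and satisfies $AX_0=(AA^\dagger)C=C$, so it solves the equation --- and it will serve as the particular solution in the last part. One could instead read the equivalence of solvability with $\mathscr{R}(C)\subseteq\mathscr{R}(A)$ straight off the Douglas Lemma, Lemma~\ref{Douglas}, and invoke the Moore--Penrose calculus only to produce the explicit solution $A^\dagger C$.

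For the general solution, assume the equivalent conditions hold. Given any $T\in\mathbb{B}(\mathscr{H})$, a direct computation using $AA^\dagger C=C$ and $AA^\dagger A=A$ shows that $A\bigl(A^\dagger C+(I-A^\dagger A)T\bigr)=C$, so every operator of the stated shape is a solution. Conversely, if $X$ is any solution, set $Y:=X-A^\dagger C$; then $AY=C-C=0$, so $\mathscr{R}(Y)\subseteq\mathscr{N}(A)$, and since $I-A^\dagger A=I-P_{\overline{\mathscr{R}(A^*)}}=P_{\mathscr{N}(A)}$ fixes every vector in $\mathscr{N}(A)$, we get $(I-A^\dagger A)Y=Y$. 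Choosing $T=Y$ gives $X=A^\dagger C+(I-A^\dagger A)T$, as desired.

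I do not expect any real difficulty; the argument is essentially bookkeeping with the two orthogonal projections $AA^\dagger$ and $A^\dagger A$. The one point that genuinely needs the closed-range hypothesis is the boundedness of $A^\dagger$ --- hence of the candidate solution $A^\dagger C$ and of the parametrization $A^\dagger C+(I-A^\dagger A)T$; without it one is thrown back on the Douglas Lemma and on a description of the solution set phrased through $\overline{\mathscr{R}(A^*)}$ rather than through the (now merely densely defined) $A^\dagger A$.
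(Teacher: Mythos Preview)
Your proof is correct and is the standard argument. Note, however, that the paper does not supply its own proof of this lemma: it is quoted verbatim as \cite[Theorem 3.1]{Koliha} and used as a black box, so there is no in-paper argument to compare against. Your cycle of implications and the parametrization via $Y=X-A^\dagger C$ with $(I-A^\dagger A)Y=Y$ is exactly how the result is proved in the cited source.
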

\begin{proposition}\label{star}
Let $ A, B\in\mathbb{B}(\mathscr{H})$. Then, \\
\begin{enumerate}
\item[(a)] If $A$ has closed range and $B\stackrel{*}{\leq}A $, then $ X=A^\dagger $ is a solution of the system $ BXA=B=AXB $.\\
\item[(b)] If $B$ has closed range and $B\stackrel{*}{\leq}A $, then $ X=B^\dagger $ is a solution of the system $ BXA=B=AXB $.
\end{enumerate}
\end{proposition}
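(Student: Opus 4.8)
The plan is to verify each part by a direct substitution, after translating the hypothesis $B\stackrel{*}{\leq}A$ into one‑sided factorizations of $B$ supplied by Lemma \ref{order}. No use of Theorem \ref{CXA=C=AXC} or Proposition \ref{main} is needed, since those require extra structural assumptions ($B$ self‑adjoint, or the reverse inclusions); here the two $*$‑order equalities already give everything directly.

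\emph{Part (a).} Writing $B\stackrel{*}{\leq}A$ out as $BB^*=AB^*$ and $B^*B=B^*A$, Lemma \ref{order}(a) applied to the first identity yields $B=AQ$ for some $Q\in\mathscr{OP}(\mathscr{H})$, and Lemma \ref{order}(b) applied to the second yields $B=PA$ for some $P\in\mathscr{OP}(\mathscr{H})$. Since $A$ has closed range, $A^\dagger\in\mathbb{B}(\mathscr{H})$ exists and $AA^\dagger A=A$ by (\ref{m1}). Taking $X=A^\dagger$, I would then compute
\[
AXB=AA^\dagger B=(AA^\dagger A)Q=AQ=B \qquad\text{and}\qquad BXA=BA^\dagger A=P(AA^\dagger A)=PA=B,
\]
which is exactly the system $BXA=B=AXB$.

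\emph{Part (b).} Here I would instead invoke Lemma \ref{order}(d): $B\stackrel{*}{\leq}A$ is equivalent to $B=P_{\overline{\mathscr{R}(B)}}A=AP_{\overline{\mathscr{R}(B^*)}}$. As $B$ has closed range, (\ref{m1}) gives $P_{\overline{\mathscr{R}(B)}}=P_{\mathscr{R}(B)}=BB^\dagger$ and $P_{\overline{\mathscr{R}(B^*)}}=P_{\mathscr{R}(B^*)}=B^\dagger B$, so these identities read $B=BB^\dagger A$ and $B=AB^\dagger B$. Substituting $X=B^\dagger$ then gives $BXA=BB^\dagger A=B$ and $AXB=AB^\dagger B=B$ at once.

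I do not expect a genuine obstacle: the argument is entirely a matter of converting the two $*$‑order equalities into the one‑sided factorizations $B=AQ$, $B=PA$ (part (a)) or $B=BB^\dagger A$, $B=AB^\dagger B$ (part (b)), and then cancelling using $AA^\dagger A=A$ (resp.\ the closed‑range identities $BB^\dagger=P_{\mathscr{R}(B)}$, $B^\dagger B=P_{\mathscr{R}(B^*)}$). The only thing to be careful about is keeping track of which side each orthogonal projection acts on; no new idea beyond Lemma \ref{order} and the existence of the Moore--Penrose inverse is required.
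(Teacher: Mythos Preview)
Your argument is correct and follows essentially the same route as the paper: both parts rest on converting $B\stackrel{*}{\leq}A$ into the one-sided factorizations $B=AQ$, $B=PA$ supplied by Lemma~\ref{order}, and then cancelling with the appropriate Moore--Penrose identities. Your part~(b) is identical to the paper's; in part~(a) you are slightly more direct than the paper, which passes through the range inclusions $\mathscr{R}(B)\subseteq\mathscr{R}(A)$, $\mathscr{R}(B^*)\subseteq\mathscr{R}(A^*)$ and invokes Lemma~\ref{AX=C} to obtain $AA^\dagger B=B$ and $BA^\dagger A=B$, whereas you substitute the factorizations straight into $AA^\dagger A=A$.
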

\begin{proof}
(a) Let $A$ be a closed range operator and $B\stackrel{*}{\leq}A $. It follows from Lemma \ref{order}(d) that $B=AP_{\overline{\mathscr{R}(B^*)}} $ and $B=P_{\overline{\mathscr{R}(B)}}A$. Hence, $\mathscr{R}(B)\subseteq\mathscr{R}(A)$ and $\mathscr{R}(B^*)\subseteq\mathscr{R}(A^*)$. It follows from $ \mathscr{R}(B)\subseteq\mathscr{R}(A)$ and Lemma \ref{AX=C} that $ AA^\dagger B=B $. It follows from $ \mathscr{R}(B^*)\subseteq\mathscr{R}(A^*)$ and Lemma \ref{AX=C} that $ BA^\dagger A=\left((A^\dagger A)^*B^*\right)^*=\left(A^*{A^\dagger}^* B^*\right)^*=B $. Hence, $ X=A^\dagger $ is a solution of the system of operator equations $ BXA=B=AXB $.\\
(b) Let $B$ be a closed range operator and $B\stackrel{*}{\leq}A $. It follows from Lemma \ref{order} that $ B=AP_{\overline{\mathscr{R}(B^*)}} $ and $B=P_{\mathscr{R}(B)}A $. Applying (\ref{m1}), we conclude that $ AB^\dagger B=B $ and $BB^\dagger A=B $. Hence, $X=B^\dagger $ is a solution of the system $ BXA=B=AXB $.
\end{proof}

\begin{proposition}\label{remark}
Let $ A, B, X\in\mathbb{B}(\mathscr{H})$.\\
 If $ A\stackrel{*}{\leq}B $ and $BXA=B=AXB $, then $ \mathscr{N}(B)=\mathscr{N}(A), \mathscr{R}(B)=\mathscr{R}(A) {\rm ~and~} AXA=A $.
\end{proposition}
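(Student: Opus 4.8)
The plan is to deduce this from Proposition~\ref{main}; the only thing one has to check is that the relation $A\stackrel{*}{\leq}B$ automatically supplies the range and kernel inclusions that appear there as hypotheses.

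First I would unpack the $*$-order. By definition $A\stackrel{*}{\leq}B$ means $AA^*=BA^*$ and $A^*A=A^*B$. Applying Lemma~\ref{order}(a) to the first identity gives $A=BP_{\overline{\mathscr{R}(A^*)}}$, whence $\mathscr{R}(A)=\mathscr{R}(BP_{\overline{\mathscr{R}(A^*)}})\subseteq\mathscr{R}(B)$. Applying Lemma~\ref{order}(b) to the second identity gives $A=P_{\overline{\mathscr{R}(A)}}B$; hence if $Bx=0$ then $Ax=P_{\overline{\mathscr{R}(A)}}Bx=0$, that is, $\mathscr{N}(B)\subseteq\mathscr{N}(A)$.

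Now $\mathscr{R}(A)\subseteq\mathscr{R}(B)$, $\mathscr{N}(B)\subseteq\mathscr{N}(A)$, together with the standing hypothesis $BXA=B=AXB$, are precisely the hypotheses of the forward implication of Proposition~\ref{main}. Invoking it yields at once $\mathscr{N}(B)=\mathscr{N}(A)$, $\mathscr{R}(B)=\mathscr{R}(A)$ and $AXA=A$, which is the assertion.

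There is essentially no obstacle here: the entire content is the observation that $\stackrel{*}{\leq}$ encodes one-sided range and kernel inclusions, which is immediate from Lemma~\ref{order}. Alternatively one could argue directly from the block form $A=A_{1}\bigoplus0$, $B=A_{1}\bigoplus B_{1}$ furnished by Lemma~\ref{order}(e) and compute $BXA$ and $AXB$ blockwise; in fact that computation shows the slightly stronger statement $B=A$ (once $\mathscr{N}(B)=\mathscr{N}(A)$ is known, Lemma~\ref{order}(c) forces $P_{\mathscr{N}(A^*)}BP_{\mathscr{N}(A)}=0$), but this strengthening is not needed for the sequel, so routing through Proposition~\ref{main} is the cleanest route.
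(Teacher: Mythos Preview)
Your proof is correct and follows essentially the same route as the paper: the paper invokes Lemma~\ref{order}(d) to obtain $A=P_{\overline{\mathscr{R}(A)}}B=BP_{\overline{\mathscr{R}(A^*)}}$, deduces $\mathscr{R}(A)\subseteq\mathscr{R}(B)$ and $\mathscr{N}(B)\subseteq\mathscr{N}(A)$, and then appeals to Proposition~\ref{main}. Your use of parts (a) and (b) separately in place of (d) is an immaterial variation, and your aside that in fact $B=A$ follows (via Lemma~\ref{order}(c) and $BP_{\mathscr{N}(A)}=BP_{\mathscr{N}(B)}=0$) is a correct strengthening not stated in the paper.
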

\begin{proof}
 Let $ A\stackrel{*}{\leq}B $ and $BXA=B=AXB $. Applying Lemma \ref{order}(d) we have $A=P_{\overline{\mathscr{R}(A)}}B=BP_{\overline{\mathscr{R}(A^*)}} $. Hence, $ \mathscr{R}(A)\subseteq\mathscr{R}(B)$ and $\mathscr{N}(B)\subseteq\mathscr{N}(A)$. Using Proposition \ref{main},
 \begin{eqnarray*}
 \mathscr{N}(B)=\mathscr{N}(A), \mathscr{R}(B)=\mathscr{R}(A) {\rm ~and~} AXA=A.
 \end{eqnarray*}
\end{proof}
\begin{remark}
Note that the converse of Proposition \ref{remark} is not true, in general. Set $A^\dagger, A^*, A$ instead of $A, B, X$. If $ A\in\mathbb{B}(\mathscr{H})$ has closed range, then, by (\ref{m1}), we have $\mathscr{R}(A^*)=\mathscr{R}(A^\dagger), \mathscr{N}(A^*)=\mathscr{N}(A^\dagger)$ and $A^\dagger AA^\dagger=A^\dagger $ but not $ A^\dagger \stackrel{*}{\leq}A^* $. Indeed, if $ A^\dagger \stackrel{*}{\leq}A^* $, then by utilizing Lemma \ref{order}(d), we have $A^\dagger=P_{\mathscr{R}(A^\dagger)}A^* $. It follows from $ \mathscr{R}(A^\dagger)=\mathscr{R}(A^*)$ that $A^\dagger=P_{\mathscr{R}(A^*)}A^*=A^* $.
\end{remark}
\begin{theorem}
Let $ A, B\in\mathbb{B}(\mathscr{H})$ and $B\stackrel{*}{\leq}A$. Then, the following statements are equivalent:\\
\begin{enumerate}
\item[(a)] There exists a solution $ X\in\mathbb{B}(\mathscr{H})$ of the system $ BXA=B=AXB $;\\
\item[(b)] $B\stackrel{*}{\leq}AXA$\\
\end{enumerate}
\end{theorem}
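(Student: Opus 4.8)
The plan is to fix $ X\in\mathbb{B}(\mathscr{H}) $ and establish the sharper pointwise assertion that $ X $ is a solution of $ BXA=B=AXB $ if and only if $ B\stackrel{*}{\leq}AXA $; the form in the statement is then immediate. Everything will run through the $*$-order characterizations in Lemma \ref{order} together with the standing hypothesis $ B\stackrel{*}{\leq}A $. Put $ E=P_{\overline{\mathscr{R}(B)}} $ and $ F=P_{\overline{\mathscr{R}(B^*)}} $. From $ B\stackrel{*}{\leq}A $, Lemma \ref{order}(d) (equivalently, its parts (a)--(b)) supplies the factorizations $ B=EA $ and $ B=AF $, and these are the only consequences of the hypothesis I intend to use.

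First I would record two identities valid without any further hypothesis: substituting $ B=AF $ gives $ AXB=(AXA)F $, and substituting $ B=EA $ gives $ BXA=E(AXA) $. Hence $ X $ solves $ BXA=B=AXB $ if and only if $ (AXA)F=B $ and $ E(AXA)=B $ both hold. The problem is thereby reduced to a condition phrased entirely in terms of $ AXA $, $ B $, and two fixed projections, and the last step is to recognize this as a $*$-order statement: by Lemma \ref{order}(a) the equation $ (AXA)F=B $ is equivalent to $ BB^*=(AXA)B^* $, and by Lemma \ref{order}(b) the equation $ E(AXA)=B $ is equivalent to $ B^*B=B^*(AXA) $. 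By the definition (\ref{m2}), the conjunction of these two equalities is precisely $ B\stackrel{*}{\leq}AXA $. Chaining the equivalences finishes the proof.

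If one prefers to split into the two implications, the computations are just as short. For $ (a)\Rightarrow(b) $, unpack $ B\stackrel{*}{\leq}A $ as $ AB^*=BB^* $ and $ B^*A=B^*B $, and compute $ (AXA)B^*=(AX)(AB^*)=(AX)(BB^*)=(AXB)B^*=BB^* $ and, symmetrically, $ B^*(AXA)=(B^*A)(XA)=(B^*B)(XA)=B^*(BXA)=B^*B $. For $ (b)\Rightarrow(a) $, apply Lemma \ref{order}(d) to each of $ B\stackrel{*}{\leq}A $ and $ B\stackrel{*}{\leq}AXA $ to obtain $ B=AF=EA $ and $ B=(AXA)F=E(AXA) $, and substitute. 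Either way I do not anticipate a genuine obstacle; the only point demanding attention is the direction of the $*$-order, since $ B\stackrel{*}{\leq}A $ must be used to write $ B $ --- not $ A $ --- as $ A $ followed (or preceded) by the projection onto $ \overline{\mathscr{R}(B^*)} $ (resp.\ $ \overline{\mathscr{R}(B)} $). It is also worth remarking why the hypothesis $ B\stackrel{*}{\leq}A $ cannot be dropped: $ B\stackrel{*}{\leq}AXA $ alone need not make $ X $ a solution (in the spirit of the Remark following Proposition \ref{remark} on the failure of such converses), and both hypotheses are genuinely used --- one provides the factorizations of $ B $ through $ A $, the other the matching factorizations of $ B $ through $ AXA $.
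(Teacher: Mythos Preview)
Your proof is correct and follows essentially the same route as the paper: both use Lemma~\ref{order}(d) to convert $B\stackrel{*}{\leq}A$ into $B=P_{\overline{\mathscr{R}(B)}}A=AP_{\overline{\mathscr{R}(B^*)}}$, substitute to rewrite $BXA$ and $AXB$ in terms of $AXA$, and then identify the resulting conditions $E(AXA)=B$ and $(AXA)F=B$ with $B\stackrel{*}{\leq}AXA$ via the projection characterization of the $*$-order. Your packaging as a single chain of equivalences (and your observation that the statement is really pointwise in $X$) is slightly cleaner, and your alternative direct computation for $(a)\Rightarrow(b)$ from the definition~(\ref{m2}) is a harmless variant, but there is no substantive difference in method.
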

\begin{proof}
$ (a)\Longrightarrow (b)$: Let $ X\in\mathbb{B}(\mathscr{H})$ is a solution of the system $ BXA=B=AXB $. Hence, $B-BXA=0 $ and $B-AXB=0 $. It follows from the assumption $ B\stackrel{*}{\leq}A $ and Lemma \ref{order}(d) that $ B=P_{\overline{\mathscr{R}(B)}}A $ and $B=AP_{\overline{\mathscr{R}(B^*)}} $. Hence,
 \begin{eqnarray*}
 P_{\overline{\mathscr{R}(B)}}(B-AXA)=B-P_{\overline{\mathscr{R}(B)}}AXA=B-BXA=0
 \end{eqnarray*}
 {\rm and}
 \begin{eqnarray*}
 (B-AXA)P_{\overline{\mathscr{R}(B^*)}}=B-AXAP_{\overline{\mathscr{R}(B^*)}}=B-AXB=0.
 \end{eqnarray*}
 Therefore, $B\stackrel{*}{\leq}AXA $.\\
$ (b)\Longrightarrow (a)$: Suppose that $ B\stackrel{*}{\leq}AXA $. Applying Lemma \ref{order}(d), we infer that $P_{\overline{\mathscr{R}(B)}}(B-AXA)=0 $ and $(B-AXA)P_{\overline{\mathscr{R}(B^*)}}=0 $. It follows from the assumption $ B\stackrel{*}{\leq}A $ and Lemma \ref{order}(d) that $ B=P_{\overline{\mathscr{R}(B)}}A $ and $B=AP_{\overline{\mathscr{R}(B^*)}} $, whence
\begin{eqnarray*}
 B-BXA=B-P_{\overline{\mathscr{R}(B)}}AXA= P_{\overline{\mathscr{R}(B)}}(B-AXA)=0
\end{eqnarray*} and
\begin{eqnarray*}
 B-AXB=B-AXAP_{\overline{\mathscr{R}(B^*)}}= (B-AXA)P_{\overline{\mathscr{R}(B^*)}}=0.
\end{eqnarray*}
Therefore, $X$ is a solution of the system $ BXA=B=AXB $.
\end{proof}

Let $ A, B\in\mathbb{B}(\mathscr{H})$ have closed ranges. It follows from Proposition \ref{star} that $A^\dagger $ and $B^\dagger $ are solutions of the system $ BXA=B=AXB$. Therefore, we are interested in the study of the following system of operator equations:\\
\begin{eqnarray}\label{m31}
BXA=B=AXB;
\end{eqnarray}
\begin{eqnarray}\label{m32}
BAX=B=XAB.
\end{eqnarray}
Let $ A, B\in\mathbb{B}(\mathscr{H})$. An operator $ C\in\mathbb{B}(\mathscr{H})$ is said to be an inverse of $A$ along $B$ if it fulfills one of the equations (\ref{m31}) or (\ref{m32}). If $ A\in\mathbb{B}(\mathscr{H})$ is invertible, then $ X=A^{-1} $ is a solution of the system $ XA=I=AX$. Hence, $A^{-1} $ is an inverse of $A$ along $ I, $ where $I $ is the identity of $ \mathbb{B}(\mathscr{H})$.\\
 Let $ A\in\mathbb{B}(\mathscr{H})$ have closed range. Using (\ref{m1}), we have $AA^\dagger A=A=AA^\dagger A $. Hence, $A^\dagger $ satisfies Eq. (\ref{m31}). Therefore, $A^\dagger $ is the inverse of $A$ along $A$.\\
It follows from (\ref{m1}) that $ A^*AA^\dagger =A^*=A^\dagger AA^* $. Hence, $A^\dagger$ satisfies Eq. (\ref{m32}). Therefore, $A$ is the inverse of $ A $ along $ A^* $.

\begin{lemma}\label{AXB=C}\cite[Theorem 2.1]{Mitra}
Let $ C\in\mathbb{B}(\mathscr{H})$ and $A, B\in\mathbb{B}(\mathscr{H})$ have closed ranges. Then, the equation $ AXB=C $ has a solution $ X\in\mathbb{B}(\mathscr{H})$ if and only if $ \mathscr{R}(C)\subseteq \mathscr{R}(A), \mathscr{R}(C^*)\subseteq \mathscr{R}(B^*)$, and this if and only if $ AA^\dagger CB^\dagger B=C $. In this case, $ X=A^\dagger CB^\dagger+U-A^\dagger AUBB^\dagger $, where $U\in\mathbb{B}(\mathscr{H})$ is arbitrary.
\end{lemma}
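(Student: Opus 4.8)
The plan is to prove the chain of equivalences and then read off the general solution, working entirely from Lemma~\ref{AX=C} and the Moore--Penrose identities in~(\ref{m1}). The easy direction is immediate: if $AXB=C$ for some $X\in\mathbb{B}(\mathscr{H})$, then $\mathscr{R}(C)=\mathscr{R}(AXB)\subseteq\mathscr{R}(A)$, and taking adjoints in $C^*=B^*X^*A^*$ gives $\mathscr{R}(C^*)\subseteq\mathscr{R}(B^*)$. For the converse I would split the two-sided equation into two one-sided ones. Since $A$ has closed range, Lemma~\ref{AX=C} turns $\mathscr{R}(C)\subseteq\mathscr{R}(A)$ into $AA^\dagger C=C$; dually, $B$ having closed range forces $B^*$ to have closed range, so applying Lemma~\ref{AX=C} to $B^*Y=C^*$ and using $(B^*)^\dagger=(B^\dagger)^*$ converts $\mathscr{R}(C^*)\subseteq\mathscr{R}(B^*)$ into $(B^\dagger B)^*C^*=C^*$, which, since $B^\dagger B$ is self-adjoint, becomes $CB^\dagger B=C$ after passing to adjoints. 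Multiplying these, $AA^\dagger CB^\dagger B=(AA^\dagger C)B^\dagger B=CB^\dagger B=C$. Conversely, if $AA^\dagger CB^\dagger B=C$, then $X_0:=A^\dagger CB^\dagger$ satisfies $AX_0B=AA^\dagger CB^\dagger B=C$, so the equation is solvable and the two range conditions follow at once. This closes the triangle of equivalences and produces $X_0=A^\dagger CB^\dagger$ as a particular solution.

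For the general solution I would identify $\{U-A^\dagger AUBB^\dagger:U\in\mathbb{B}(\mathscr{H})\}$ with the solution set of the homogeneous equation $AXB=0$. One inclusion is a direct computation: $A(U-A^\dagger AUBB^\dagger)B=AUB-(AA^\dagger A)U(BB^\dagger B)=AUB-AUB=0$, using $AA^\dagger A=A$ and $BB^\dagger B=B$. Conversely, if $AXB=0$, then the choice $U=X$ gives $U-A^\dagger AUBB^\dagger=X-A^\dagger(AXB)B^\dagger=X$. Adding the particular solution $X_0$, every solution of $AXB=C$ is of the form $X=A^\dagger CB^\dagger+U-A^\dagger AUBB^\dagger$ with $U\in\mathbb{B}(\mathscr{H})$ arbitrary; indeed, a given solution $X$ is recovered by taking $U=X$, since $A^\dagger CB^\dagger+X-A^\dagger(AXB)B^\dagger=A^\dagger CB^\dagger+X-A^\dagger CB^\dagger=X$.

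The argument is essentially routine once Lemma~\ref{AX=C} is available; the only step that needs genuine care is the dual one, where one must record that $B^*$ inherits closed range from $B$, that $(B^*)^\dagger=(B^\dagger)^*$, and that $B^\dagger B$ is self-adjoint, so that $\mathscr{R}(C^*)\subseteq\mathscr{R}(B^*)$ is transcribed correctly as $CB^\dagger B=C$ rather than a mismatched variant. Beyond that, everything is bookkeeping with the four Moore--Penrose identities.
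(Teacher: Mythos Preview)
Your argument is correct. Note, however, that the paper does not supply its own proof of this lemma: it is quoted verbatim as \cite[Theorem~2.1]{Mitra} and used as a black box. So there is no in-paper proof to compare against; your write-up simply fills in what the authors chose to cite, and it does so cleanly using only Lemma~\ref{AX=C} and the Moore--Penrose identities already recorded in~(\ref{m1}).
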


In the next result we provide a general solution of the system $ BXA=B=AXB $.
\begin{theorem}
Let $ A, B\in\mathbb{B}(\mathscr{H})$ have closed ranges and $B\stackrel{*}{\leq}A $. Then, the general solution of the system of operator equations $ BXA=B=AXB $ is
\begin{eqnarray*}
 X&=&A^\dagger BB^\dagger +A^\dagger \left[B(I-AA^\dagger)+(A-B)S\right](A-B)^\dagger +T-A^\dagger AT(A-B)^\dagger (A-B)\\
 &&-A^\dagger B(I-AA^\dagger )(A-B)^\dagger BB^\dagger-A^\dagger (A-B)S(A-B)^\dagger BB^\dagger\\
 &&-A^\dagger ATBB^\dagger +A^\dagger AT(A-B)^\dagger (A-B)BB^\dagger.
\end{eqnarray*}
 where $S, T\in\mathbb{B}(\mathscr{H}).$
\end{theorem}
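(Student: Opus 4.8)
The plan is to peel the two--sided system down to one--sided equations of the kind handled by Lemma~\ref{AXB=C}, to apply that lemma twice, and then to simplify; every manipulation is controlled by a short list of identities read off from $B\stackrel{*}{\leq}A$. From Lemma~\ref{order}(d) one has $B=P_{\overline{\mathscr{R}(B)}}A=AP_{\overline{\mathscr{R}(B^{*})}}$, so $\mathscr{R}(B)\subseteq\mathscr{R}(A)$ and $\mathscr{R}(B^{*})\subseteq\mathscr{R}(A^{*})$; from Lemma~\ref{order}(e), $B=B_{1}\bigoplus 0$ and $A=B_{1}\bigoplus A_{1}$ over $\overline{\mathscr{R}(B^{*})}\bigoplus\mathscr{N}(B)\to\overline{\mathscr{R}(B)}\bigoplus\mathscr{N}(B^{*})$, where $B_{1}$ is invertible, whence $A-B=0\bigoplus A_{1}$. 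Since $A$ has closed range, $A_{1}$ and hence $A-B$ have closed ranges, so $(A-B)^{\dagger}$ is meaningful. Writing $M=A-B$, one records $B^{*}M=M^{*}B=BM^{*}=MB^{*}=0$ (equivalently $\mathscr{R}(M)\perp\mathscr{R}(B)$ and $\mathscr{R}(M^{*})\perp\mathscr{R}(B^{*})$) together with the consequences $A^{\dagger}=B^{\dagger}+M^{\dagger}$, $AA^{\dagger}=BB^{\dagger}+MM^{\dagger}$, $A^{\dagger}A=B^{\dagger}B+M^{\dagger}M$, $B^{\dagger}M=M^{\dagger}B=MB^{\dagger}=BM^{\dagger}=0$, $B^{\dagger}BM^{\dagger}M=MM^{\dagger}BB^{\dagger}=0$, $A^{\dagger}BB^{\dagger}=B^{\dagger}$, $A^{\dagger}M=M^{\dagger}M$, $MA^{\dagger}=MM^{\dagger}$, $BA^{\dagger}A=B=AB^{\dagger}B$, $A^{\dagger}AB^{\dagger}B=B^{\dagger}B$, and $M^{\dagger}(I-BB^{\dagger})=M^{\dagger}$. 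This elementary bookkeeping is what keeps the rest of the argument finite.

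The next step is a reduction: under $B\stackrel{*}{\leq}A$, an operator $X$ solves $BXA=B=AXB$ if and only if $AXB=B$ and $BX(A-B)=0$. Indeed, for the ``only if'' part, write $BXA=BXB+BX(A-B)$ and multiply $BXA=B$ on the right by $P_{\overline{\mathscr{R}(B^{*})}}$: since $BP_{\overline{\mathscr{R}(B^{*})}}=B$ and $(A-B)P_{\overline{\mathscr{R}(B^{*})}}=AP_{\overline{\mathscr{R}(B^{*})}}-BP_{\overline{\mathscr{R}(B^{*})}}=B-B=0$, this isolates $BXB=B$, and then $BX(A-B)=B-BXB=0$. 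For the ``if'' part, multiplying $AXB=B$ on the left by $P_{\overline{\mathscr{R}(B)}}$ and using $P_{\overline{\mathscr{R}(B)}}A=B$ gives $BXB=B$, so $BXA=BXB+BX(A-B)=B+0=B$ while $AXB=B$ by hypothesis. (That $AXB=B$ is consistent is already known from Proposition~\ref{star}, or directly from $\mathscr{R}(B)\subseteq\mathscr{R}(A)$ via Lemma~\ref{AXB=C}.)

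Now I would apply Lemma~\ref{AXB=C} twice. Its general solution of $AXB=B$ is $X=A^{\dagger}BB^{\dagger}+U-A^{\dagger}AUBB^{\dagger}$ with $U\in\mathbb{B}(\mathscr{H})$ arbitrary --- note that the leading term $A^{\dagger}BB^{\dagger}$ already matches the claimed formula. Substituting this into $BX(A-B)=0$ and using $BA^{\dagger}A=B$ and $B^{\dagger}M=0$ to annihilate the correction term, I am left with precisely the condition $BU(A-B)=0$ on $U$; solving this in turn by Lemma~\ref{AXB=C}, substituting back, and collapsing the resulting cross terms with the identities of the first paragraph ($B^{\dagger}BM^{\dagger}M=0$, $MM^{\dagger}BB^{\dagger}=0$, $A^{\dagger}AB^{\dagger}B=B^{\dagger}B$, $A^{\dagger}BB^{\dagger}=B^{\dagger}$, $M^{\dagger}(I-BB^{\dagger})=M^{\dagger}$, and so on) reorganizes $X$ into the displayed closed form. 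The two free operators $S$ and $T$ there encode the two natural pieces of residual freedom (the solution of the auxiliary equation $BU(A-B)=0$ and the built-in freedom $U\mapsto U-A^{\dagger}AUBB^{\dagger}$ from the first step); the parametrization is not minimal but it is faithful. As a final, purely mechanical check I would substitute the displayed $X$ back into $BXA$ and $AXB$ and verify that it is a solution for every $S,T$.

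The conceptual part is thin --- reduce, then solve $AXB=C$ twice --- and I expect the real work to be the last mile: steering the intermediate parametrizations so that the final expression comes out in $A^{\dagger}$ and $(A-B)^{\dagger}$ alone (rather than in $B^{\dagger}$ and $(A-B)^{\dagger}$), verifying that $S$ and $T$ genuinely sweep out the whole affine solution set so that no generality is lost among the many cancellations, and not dropping a term in the projection identities. Throughout, the block picture $B=B_{1}\bigoplus 0$, $A=B_{1}\bigoplus A_{1}$ --- in which $X$ solves the system exactly when its $(1,1)$ block equals $B_{1}^{-1}$, its $(1,2)$ block annihilates $\mathscr{R}(A_{1})$, and its $(2,1)$ block has range contained in $\mathscr{N}(A_{1})$ --- serves as a reliable sanity anchor.
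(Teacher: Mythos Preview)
Your approach is correct but follows a different path from the paper's. You reduce the system to $AXB=B$ together with the homogeneous constraint $BX(A-B)=0$, and after writing $X=A^{\dagger}BB^{\dagger}+U-A^{\dagger}AUBB^{\dagger}$ you obtain the clean condition $BU(A-B)=0$, to be solved by a second application of Lemma~\ref{AXB=C}. The paper instead substitutes $X=A^{\dagger}BB^{\dagger}+W-A^{\dagger}AWBB^{\dagger}$ directly into $BXA=B$, recognises $A^{\dagger}B+WA-A^{\dagger}AWB$ as a solution of $BY=B$, writes it via Lemma~\ref{AX=C} as $B^{\dagger}B+(I-B^{\dagger}B)S$, left-multiplies by $A$ to arrive at an equation of the shape $AW(A-B)=B(I-AA^{\dagger})+(A-B)S$, and then solves for $W$. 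That detour through $BY=B$ is exactly what manufactures the $(A-B)S$ pieces appearing in the stated formula.

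What each route buys: yours is conceptually tighter (a homogeneous constraint on a single parameter, orthogonality identities doing the bookkeeping), and the block picture you keep as a sanity anchor makes correctness transparent. The paper's route, though more circuitous, is tailored to land on the displayed expression in $A^{\dagger}$ and $(A-B)^{\dagger}$ with the explicit $S$ and $T$ as stated; your natural output would be a parametrisation by a single operator living in $B^{\dagger}$ and $(A-B)^{\dagger}$, so the ``last mile'' you anticipate---reparametrising to match the theorem's exact formula---is genuine extra work rather than cosmetic rewriting. If your goal is to reproduce the stated formula verbatim, following the paper's $BY=B$ step is the shorter path; if your goal is merely to describe the full solution set, your route gets there more cleanly.
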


\begin{proof}
Let $ A, B $ have closed ranges. It follows from the assumption $ B\stackrel{*}{\leq}A$ and Lemma \ref{order}(d) that $ B=AP_{\mathscr{R}(B^*)} $. Hence, $ \mathscr{R}(B)\subseteq\mathscr{R}(A)$. Using Lemma \ref{AX=C}, we have $AA^\dagger B=B $. It follows from $ AA^\dagger BB^\dagger B=B $ and Lemma \ref{AXB=C} that the equation $ AXB=B $ is solvable. In this case, the general solution is
\begin{eqnarray}\label{AW}
 X=A^\dagger BB^\dagger+W-A^\dagger AWBB^\dagger,
\end{eqnarray}
where $W\in\mathbb{B}(\mathscr{H})$ is arbitrary. If $X$ satisfies the equation $ BXA=B$, then
\begin{eqnarray*}
B(A^\dagger BB^\dagger+W-A^\dagger AWBB^\dagger)A=B.
\end{eqnarray*}
 It follows from the assumption $ B\stackrel{*}{\leq}A$ and Lemma \ref{order}(d) that $B=P_{\mathscr{R}(B)}A $. Applying (\ref{m1}), $BB^\dagger A=B $. Hence,
 \begin{eqnarray*}
 BA^\dagger B+BWA-BA^\dagger AWB=B.
 \end{eqnarray*}
  Therefore, $B(A^\dagger B+WA-A^\dagger AWB)=B $. So, $A^\dagger B+WA-A^\dagger AWB $ is a solution of the equation $ BX=B $. Utilizing Lemma \ref{AX=C} again, we have
 \begin{eqnarray}\label{ABW}
 A^\dagger B+WA-A^\dagger AWB=B^\dagger B+(I-B^\dagger B)S,
 \end{eqnarray}
where $S\in\mathbb{B}(\mathscr{H})$ is arbitrary. Multiply the left hand side of Eq. (\ref{ABW}) by $ A $, to get
\begin{eqnarray*}
AA^\dagger B+AWA-AA^\dagger AWB=AB^\dagger B+A(I-B^\dagger B)S
\end{eqnarray*}It follows from the assumption $ B\stackrel{*}{\leq}A$ and Lemma \ref{order}(d) that $B=AP_{\mathscr{R}(B^*)} $. Applying (\ref{m1}), $AB^\dagger B =B $. We derive that
\begin{eqnarray*}
AA^\dagger B+AWA-AWB=B+(A-B)S.
\end{eqnarray*}
Now, we get $ AW(A-B)=B(I-AA^\dagger)+(A-B)S $. So, $ W $ is a solution of the equation $ AX(A-B)=B(I-AA^\dagger)+(A-B)S $. Using Lemma \ref{AX=C}, we get that
 \begin{eqnarray*}
 W=A^\dagger \left[B(I-AA^\dagger)+(A-B)S\right](A-B)^\dagger +T-A^\dagger AT(A-B)^\dagger (A-B),
 \end{eqnarray*}
 where $T\in\mathbb{B}(\mathscr{H})$ is arbitrary. By putting $ W $ in Eq. (\ref{AW}), we reach
 \begin{eqnarray*}
 X&=&A^\dagger BB^\dagger +A^\dagger \left[B(I-AA^\dagger)+(A-B)S\right](A-B)^\dagger +T-A^\dagger AT(A-B)^\dagger (A-B)\\
 &&-A^\dagger A( A^\dagger\left[B(I-AA^\dagger)+(A-B)S\right](A-B)^\dagger\\
 &&+T-A^\dagger AT(A-B)^\dagger (A-B) BB^\dagger\\
&=&A^\dagger BB^\dagger +A^\dagger \left[B(I-AA^\dagger)+(A-B)S\right](A-B)^\dagger +T-A^\dagger AT(A-B)^\dagger (A-B)\\
 &&-A^\dagger AA^\dagger B(I-AA^\dagger )(A-B)^\dagger BB^\dagger-A^\dagger AA^\dagger (A-B)S(A-B)^\dagger BB^\dagger\\
 &&-A^\dagger ATBB^\dagger +A^\dagger AT(A-B)^\dagger (A-B)BB^\dagger \qquad \qquad \qquad \qquad ({\rm by~} (\ref{m1}))\\
 &=&A^\dagger BB^\dagger +A^\dagger \left[B(I-AA^\dagger)+(A-B)S\right](A-B)^\dagger +T-A^\dagger AT(A-B)^\dagger (A-B)\\
 &&-A^\dagger B(I-AA^\dagger )(A-B)^\dagger BB^\dagger-A^\dagger (A-B)S(A-B)^\dagger BB^\dagger\\
 &&-A^\dagger ATBB^\dagger +A^\dagger AT(A-B)^\dagger (A-B)BB^\dagger.
 \end{eqnarray*}
 \end{proof}
\begin{theorem}\label{equvalent1}
Let $ A, B\in \mathbb{B}(\mathscr{H})$ where $A$ has closed range. If the system $ BXA=B=AXB $ is solvable, then the system $ XB=A^\dagger B, BX=BA^\dagger $ is solvable. Conversely, If $ B\stackrel{*}{\leq}A $ and the system $ XB=A^\dagger B, BX=BA^\dagger $ is solvable, then the system $ BXA=B=AXB $ is solvable.
\end{theorem}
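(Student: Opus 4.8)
The plan is to settle both implications by exhibiting the Moore--Penrose inverse $A^\dagger$ as an explicit solution of the relevant system, so that the content lies entirely in reading off the correct consequences of the hypotheses; I do not anticipate a genuine obstacle.

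For the forward implication, I would first note that since $A$ has closed range, $A^\dagger\in\mathbb{B}(\mathscr{H})$ is a well-defined bounded operator, so the system $XB=A^\dagger B$, $BX=BA^\dagger$ is meaningful. Taking $X=A^\dagger$ one has $XB=A^\dagger B$ and $BX=BA^\dagger$ trivially, so that system is solvable. A more substantive phrasing of the same point: $\mathscr{R}(BA^\dagger)\subseteq\mathscr{R}(B)$ and $\mathscr{R}\big((A^\dagger B)^*\big)=\mathscr{R}\big(B^*(A^\dagger)^*\big)\subseteq\mathscr{R}(B^*)$, so by Lemma \ref{Douglas} each of the equations $BX=BA^\dagger$ and $XB=A^\dagger B$ is individually solvable, and $A^\dagger$ is a common solution. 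I would also remark that the hypothesis that $BXA=B=AXB$ be solvable is not actually used in this implication; it is retained only so that the statement pairs symmetrically with the converse.

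For the converse, I would start from $B\stackrel{*}{\leq}A$ and invoke Lemma \ref{order}(d) (or, equivalently, the block decomposition of Lemma \ref{order}(e)) to obtain $B=P_{\overline{\mathscr{R}(B)}}A$ and $B=AP_{\overline{\mathscr{R}(B^*)}}$, hence $\mathscr{R}(B)\subseteq\mathscr{R}(A)$ and $\mathscr{R}(B^*)\subseteq\mathscr{R}(A^*)$. Together with the closed range of $A$, this is precisely the hypothesis of Proposition \ref{star}(a), so $X=A^\dagger$ solves $BXA=B=AXB$ and that system is solvable. (Directly, $\mathscr{R}(B)\subseteq\mathscr{R}(A)$ gives $AA^\dagger B=B$ by Lemma \ref{AX=C}, while $\mathscr{R}(B^*)\subseteq\mathscr{R}(A^*)$ gives $BA^\dagger A=B$ after passing to adjoints and using (\ref{m1}), whence $B(A^\dagger)A=B=A(A^\dagger)B$.) Here too the assumption that $XB=A^\dagger B$, $BX=BA^\dagger$ be solvable is, strictly speaking, not needed.

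The only point calling for a little care is the bookkeeping in the converse --- passing from $B\stackrel{*}{\leq}A$ to the range inclusions $\mathscr{R}(B)\subseteq\mathscr{R}(A)$, $\mathscr{R}(B^*)\subseteq\mathscr{R}(A^*)$, and then converting these into the operator identities $AA^\dagger B=B$ and $BA^\dagger A=B$ via the Moore--Penrose relations; everything else is formal. The substance of the theorem is thus simply that, under the stated hypotheses, $A^\dagger$ is simultaneously a solution of both systems.
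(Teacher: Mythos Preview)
Your proof is correct, and your observation that neither hypothesis is actually needed for its respective implication is accurate: the system $XB=A^\dagger B$, $BX=BA^\dagger$ is \emph{always} solved by $X=A^\dagger$, and under $B\stackrel{*}{\leq}A$ with $A$ closed range, Proposition~\ref{star}(a) already gives that $A^\dagger$ solves $BXA=B=AXB$. So as a proof of the theorem as stated, your argument is complete and much shorter than the paper's.

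The paper takes a different route that, while longer, proves something strictly stronger than what the theorem asserts. In the forward direction, rather than exhibiting $A^\dagger$, the paper starts from an arbitrary solution $\widetilde{X}$ of $BXA=B=AXB$ and shows that the transformed operator $A^\dagger A\,\widetilde{X}\,AA^\dagger$ solves the second system; in the converse it shows that any solution $\widetilde{X}$ of $XB=A^\dagger B$, $BX=BA^\dagger$ is itself already a solution of $BXA=B=AXB$. Thus the paper's argument establishes an explicit passage between solution sets, not merely the non-emptiness of each. Your approach buys brevity and exposes that the solvability statement is essentially a tautology; the paper's approach buys the additional correspondence $\widetilde{X}\mapsto A^\dagger A\widetilde{X}AA^\dagger$ and the fact that solutions of the second system feed directly back into the first, which is presumably the intended content behind the theorem even though the statement does not capture it.
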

\begin{proof}
($\Longrightarrow$): Let $ \widetilde{X} $ be a solution of the system $ BXA=B=AXB $. It follows from $ B=A\widetilde{X}B $ that $ \mathscr{R}(B)\subseteq \mathscr{R}(A)$. Using Lemma \ref{AX=C}, $ AA^\dagger B=B $. It follows from (\ref{m1}) that
\begin{eqnarray*}
P_{\overline{\mathscr{R}(A^*)}}\widetilde{X}AA^\dagger B =(A^\dagger A)\widetilde{X}(AA^\dagger)B=(A^\dagger A)\widetilde{X}(AA^\dagger B)=A^\dagger (A\widetilde{X}B)=A^\dagger B.
\end{eqnarray*}So, $ P_{\overline{\mathscr{R}(A^*)}}\widetilde{X}AA^\dagger $ is a solution of the equation $ XB=A^\dagger B $.
 Since $B^*=(B\widetilde{X}A)^*=A^*\widetilde{X}^*B^* $, we have $\mathscr{R}(B^*)\subseteq \mathscr{R}(A^*)$. Applying Lemma \ref{Douglas}, there exists $ Y\in\mathbb{B}(\mathscr{H})$ such that $ B=YA $. Hence,
 \begin{eqnarray*}
 BP_{\overline{\mathscr{R}(A^*)}}\widetilde{X}AA^\dagger&=& B(A^\dagger A)\widetilde{X}(AA^\dagger)=Y(AA^\dagger A)\widetilde{X}(AA^\dagger )\\
 &=& (YA\widetilde{X}A)A^\dagger =(B\widetilde{X}A)A^\dagger =BA^\dagger.
\end{eqnarray*}Therefore, $ P_{\overline{\mathscr{R}(A^*)}}\widetilde{X}AA^\dagger $ is a solution of the equation $ B= BA^\dagger $.
 Thus $ P_{\overline{\mathscr{R}(A^*)}}\widetilde{X}AA^\dagger $ is a solution of the system $ XB=A^\dagger B, BX=BA^\dagger $.\\
($\Longleftarrow$): Suppose that $ \widetilde{X} $ is a solution of the system $ XB=A^\dagger B, BX=BA^\dagger $. It follows from the assumption $ B\stackrel{*}{\leq}A $ that $ B=AP_{\overline{\mathscr{R}(B^*)}} $ and $B=P_{\overline{\mathscr{R}(B)}}A $. Hence, $\mathscr{R}(B)\subseteq\mathscr{R}(A)$ and $\mathscr{R}(B^*)\subseteq\mathscr{R}(A^*)$. It follows from $ \mathscr{R}(B)\subseteq\mathscr{R}(A)$ to Lemma \ref{AX=C} that $ AA^\dagger B=B $. Hence, $A\widetilde{X}B=A(A^\dagger B)=AA^\dagger B=B $. It follows from $ \mathscr{R}(B^*)\subseteq\mathscr{R}(A^*)$ and Lemma \ref{Douglas} that there exists $ Z^*\in\mathbb{B}(\mathscr{H})$ such that $ B=ZA $. Hence,
\begin{eqnarray*}
B\widetilde{X}A=(BA^\dagger )A=BA^\dagger A=ZAA^\dagger A=ZA=B.
\end{eqnarray*}
Therefore, $ \widetilde{X} $ is a solution of the system $BXA=B=AXB$.
\end{proof}
%============================================================================================================
\begin{lemma}\label{koliha}\cite[Theorem 4.2]{Koliha}
Let $ A, B, C,D\in\mathbb{B}(\mathscr{H})$ and $A, B, M=B^*(I-A^\dagger A)$ have closed ranges. Then, the system $ AX=C, \quad XB=D $ have a hermitian solution $ X\in\mathbb{B}(\mathscr{H}) $ if and only if
\begin{eqnarray*}
AA^\dagger C=C, \quad DB^\dagger B=D, \quad AD=CB
\end{eqnarray*}
and $AC^*$ and $B^*D$ are hermitian. In this case, the general hermitian solution is
\begin{eqnarray*}
X &=& A^\dagger C+(I-A^\dagger A)M^\dagger s(T)\\
&&+(I-A^\dagger A)(I-M^\dagger M)\left[A^\dagger C+(I-A^\dagger A)M^\dagger s(T)\right]^*\\
&&+ (I-A^\dagger A)(I-M^\dagger M)W(I-M^\dagger M)^*(I-A^\dagger A)^*,
\end{eqnarray*}
where $W\in\mathbb{B}(\mathscr{H})$ is hermitian and $s(T)=D^*-B^*A^\dagger C $ is the so-called Schur complement of the block matrix $ T=\left [  \begin{array}{cc}
A&C\\
B^*&D^*
\end{array}  \right] $.
\end{lemma}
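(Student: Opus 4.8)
The plan is to solve the two equations successively and then impose self-adjointness on the resulting parametrized family of solutions.

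Necessity is immediate: if $X=X^{*}$ solves $AX=C$ and $XB=D$, then $B^{*}X=(XB)^{*}=D^{*}$, and hence $AA^{\dagger}C=AA^{\dagger}AX=AX=C$, $DB^{\dagger}B=XBB^{\dagger}B=XB=D$, $AD=AXB=CB$, while $AC^{*}=A(AX)^{*}=AXA^{*}$ and $B^{*}D=B^{*}XB$ are visibly self-adjoint. Note also that, because $X=X^{*}$, the system $\{AX=C,\ XB=D\}$ is equivalent to $\{AX=C,\ B^{*}X=D^{*}\}$, so for the converse it suffices to produce a self-adjoint solution of the latter.

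Assume now the listed conditions. Since $A$ has closed range and $AA^{\dagger}C=C$, Lemma \ref{AX=C} makes $AX=C$ solvable with general solution $X=A^{\dagger}C+(I-A^{\dagger}A)Y$, $Y\in\mathbb{B}(\mathscr{H})$ arbitrary. Substituting this into $B^{*}X=D^{*}$ turns it into the reduced (``Schur complement'') equation
\[
MY=s(T),\qquad M=B^{*}(I-A^{\dagger}A),\quad s(T)=D^{*}-B^{*}A^{\dagger}C .
\]
The crux of the argument --- and the step I expect to be the main obstacle --- is to verify that this reduced equation is consistent, i.e. $MM^{\dagger}s(T)=s(T)$, equivalently $\mathscr{R}(s(T))\subseteq\overline{\mathscr{R}(M)}$; this forces one to use the hypotheses jointly, combining $AD=CB$, the self-adjointness of $AC^{*}$, and $\mathscr{R}(D^{*})\subseteq\mathscr{R}(B^{*})$ (the content of $DB^{\dagger}B=D$). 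Once consistency is established, Lemma \ref{AX=C} applies again (here the closed range of $M$ is needed) and gives $Y=M^{\dagger}s(T)+(I-M^{\dagger}M)V$, $V\in\mathbb{B}(\mathscr{H})$ arbitrary; thus the operators solving $AX=C$ and $B^{*}X=D^{*}$ are exactly
\[
X=X_{0}+(I-A^{\dagger}A)(I-M^{\dagger}M)V,\qquad X_{0}:=A^{\dagger}C+(I-A^{\dagger}A)M^{\dagger}s(T).
\]

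It remains to single out the self-adjoint members. Since $\mathscr{R}(M^{*})\subseteq\mathscr{N}(A)$, one has $(I-A^{\dagger}A)M^{\dagger}=M^{\dagger}$ and $A^{\dagger}A\,M^{\dagger}M=0$, so $Q:=(I-A^{\dagger}A)(I-M^{\dagger}M)=I-A^{\dagger}A-M^{\dagger}M$ is an orthogonal projection, and $\mathscr{R}(X_{0})\subseteq\mathscr{R}(A^{*})+\mathscr{R}(M^{*})$ gives $QX_{0}=0$. Writing $X=X_{0}+QV$, the requirement $X=X^{*}$ becomes $QV-(QV)^{*}=X_{0}^{*}-X_{0}$, an equation of the form $W_{1}-W_{1}^{*}=K$ with $K^{*}=-K$ and the unknown $W_{1}=QV$ required to have range in $\mathscr{R}(Q)$; using $AX_{0}=C$, $B^{*}X_{0}=D^{*}$ and the self-adjointness of $AC^{*}$ and $B^{*}D$ one checks this is solvable, the solution set being one particular solution plus an arbitrary term $QWQ$ with $W=W^{*}$. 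Feeding the resulting $V$ back into $X=X_{0}+QV$ and expanding $Q$ in terms of $A^{\dagger}A$ and $M^{\dagger}M$ --- routine but lengthy bookkeeping, keeping track of which factors are annihilated by the relevant idempotents, which is the secondary delicate point --- produces exactly the displayed expression in the statement, with $W$ Hermitian and arbitrary.
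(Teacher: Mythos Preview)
The paper does not prove this lemma at all: it is quoted verbatim as \cite[Theorem~4.2]{Koliha} and used later as a black box, so there is no ``paper's own proof'' to compare against.

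That said, your outline is exactly the strategy of the original Daji\'{c}--Koliha argument: first solve $AX=C$ via Lemma~\ref{AX=C}, substitute into $B^{*}X=D^{*}$ to obtain the reduced equation $MY=s(T)$, solve that, and then select the self-adjoint members of the resulting two-parameter family. Your structural observations (that $(I-A^{\dagger}A)M^{\dagger}=M^{\dagger}$, that $A^{\dagger}A$ and $M^{\dagger}M$ are orthogonal projections with orthogonal ranges so $Q=(I-A^{\dagger}A)(I-M^{\dagger}M)=I-A^{\dagger}A-M^{\dagger}M$ is itself an orthogonal projection, and that $QX_{0}=0$) are correct and are precisely the bookkeeping facts one needs.

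The two places you flag as delicate are indeed the only nontrivial steps, and in your write-up they remain genuine gaps rather than proofs. For the consistency $MM^{\dagger}s(T)=s(T)$ you say only that ``one uses the hypotheses jointly''; the actual verification goes through showing $\mathscr{R}(s(T))\subseteq\mathscr{R}(B^{*})$ (from $DB^{\dagger}B=D$) and then that the component of $s(T)$ in $\mathscr{R}(A^{*})$ vanishes, for which $AD=CB$ and the self-adjointness of $AC^{*}$ are both required. For the Hermitian selection you assert that $QV-(QV)^{*}=X_{0}^{*}-X_{0}$ is solvable for $QV$ with range in $\mathscr{R}(Q)$; this needs the check that $(I-Q)(X_{0}^{*}-X_{0})(I-Q)=0$, equivalently that $X_{0}(A^{\dagger}A+M^{\dagger}M)$ is self-adjoint, which in turn unwinds to the self-adjointness of $AC^{*}$ and $B^{*}D$. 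If you intend this as a full proof rather than a roadmap, those two computations should be written out.
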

%============================================================================================================
\begin{theorem}
Suppose that $ A, B\in\mathbb{B}(\mathscr{H}) $ have closed ranges. If $ B\stackrel{*}{\leq}A $ and $ B^*A^\dagger B, B{A^\dagger}^* B^* $ are hermitian, then the system $ BXA=B=AXB $ has a hermitian solution.
\end{theorem}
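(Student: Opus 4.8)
The plan is to reduce, via Theorem~\ref{equvalent1}, to producing a \emph{hermitian} solution of the pair of one-sided equations $XB = A^{\dagger}B$ and $BX = BA^{\dagger}$, and then to apply Koliha's criterion, Lemma~\ref{koliha}.

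First I would note that, since $B \stackrel{*}{\leq} A$ and $A$ has closed range, the converse direction of Theorem~\ref{equvalent1} applies: any operator $X$ solving $XB = A^{\dagger}B$ and $BX = BA^{\dagger}$ already solves $BXA = B = AXB$. Because the proof of that implication uses the same operator $X$ without modification, a hermitian solution of the former system is a hermitian solution of $BXA = B = AXB$. Hence it suffices to find a hermitian $X \in \mathbb{B}(\mathscr{H})$ with $BX = BA^{\dagger}$ and $XB = A^{\dagger}B$.

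For this I would apply Lemma~\ref{koliha} with the substitution $A \rightsquigarrow B$, $C \rightsquigarrow BA^{\dagger}$, $B \rightsquigarrow B$, $D \rightsquigarrow A^{\dagger}B$, so that the system $AX = C$, $XB = D$ of that lemma becomes $BX = BA^{\dagger}$, $XB = A^{\dagger}B$. The algebraic hypotheses of Lemma~\ref{koliha} are then either immediate or are exactly the present assumptions: $BB^{\dagger}(BA^{\dagger}) = BA^{\dagger}$ and $(A^{\dagger}B)B^{\dagger}B = A^{\dagger}B$ follow from $BB^{\dagger}B = B$; the condition $AD = CB$ reads $B(A^{\dagger}B) = (BA^{\dagger})B$, an identity; and the requirement that $AC^{*}$ and $B^{*}D$ be hermitian becomes the requirement that $B{A^{\dagger}}^{*}B^{*}$ and $B^{*}A^{\dagger}B$ be hermitian, which is what we have assumed. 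Lemma~\ref{koliha} then produces a hermitian $X$ solving $BX = BA^{\dagger}$ and $XB = A^{\dagger}B$, and we are done.

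The step that requires real work, and which I expect to be the main obstacle, is verifying the closed-range hypotheses of Lemma~\ref{koliha}. The two main operators appearing there are both our $B$, which has closed range by assumption; what remains is to show that $M := B^{*}(I - B^{\dagger}B)$ has closed range. Since $\mathscr{R}(B)$ is closed, $I - B^{\dagger}B = P_{\mathscr{N}(B)}$, so $\mathscr{R}(M) = B^{*}\big(\mathscr{N}(B)\big)$, and this is precisely where the closedness of $\mathscr{R}(A)$ together with $B \stackrel{*}{\leq} A$ has to be used: writing $B = B_{1}\oplus 0$ and $A = B_{1}\oplus A_{1}$ as in Lemma~\ref{order}(e), with $B_{1}$ invertible (because $\mathscr{R}(B)$ is closed) and $A_{1}$ of closed range (because $\mathscr{R}(A)$ is), one should be able to deduce from this block structure that $B^{*}(\mathscr{N}(B))$ is closed. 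Once this is settled, the reduction and the application of Koliha's lemma assemble into a complete proof.
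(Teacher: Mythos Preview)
Your plan is essentially the paper's own proof. The paper applies Lemma~\ref{koliha} with exactly your substitution $(A,B,C,D)\mapsto (B,B,BA^{\dagger},A^{\dagger}B)$, checks the same algebraic identities you list, obtains a hermitian $\widetilde X$ with $B\widetilde X=BA^{\dagger}$ and $\widetilde X B=A^{\dagger}B$, and then verifies directly that this $\widetilde X$ solves $BXA=B=AXB$; that last verification is just the $(\Longleftarrow)$ direction of Theorem~\ref{equvalent1} written out inline rather than cited, so the difference from your write-up is purely cosmetic.

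On the point you flag as the main obstacle: the paper does \emph{not} verify that $M=B^{*}(I-B^{\dagger}B)$ has closed range before invoking Lemma~\ref{koliha}; it simply checks the algebraic conditions and proceeds. So this is a concern shared with the published argument, not a gap peculiar to your approach. It is worth noting, though, that your proposed fix via the block form $B=B_{1}\oplus 0$, $A=B_{1}\oplus A_{1}$ is unlikely to help as stated: $M$ depends only on $B$, and the closedness of $\mathscr{R}(M)=B^{*}(\mathscr{N}(B))$ amounts (after applying the invertible $B_{1}^{*}$) to the closedness of $P_{\mathscr{R}(B)}(\mathscr{N}(B))$, equivalently of $\mathscr{R}(B)+\mathscr{N}(B)$, which the hypothesis $B\stackrel{*}{\le}A$ does not obviously constrain.
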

\begin{proof}
Replace $ A, B, C, D $ in Lemma \ref{koliha} by $ B, B, BA^\dagger, A^\dagger B $ to get
\begin{eqnarray*}
AA^\dagger C=BB^\dagger (BA^\dagger) =BA^\dagger=C, \quad DB^\dagger B=(A^\dagger B)B^\dagger B=A^\dagger B=D \\
\end{eqnarray*} and
\begin{eqnarray*}
AD=B(A^\dagger B)=(BA^\dagger)B=CB, \quad AC^*=B(BA^\dagger)^*=B{A^\dagger}^* B^*, \quad B^*D=B^*A^\dagger B.
\end{eqnarray*}
Using Lemma \ref{koliha}, the system $ XB=A^\dagger B, BX=BA^\dagger $ has a hermitian solution, say, $ \widetilde{X}$. It follows from the assumption $ B\stackrel{*}{\leq}A $ that $ B=AP_{\overline{\mathscr{R}(B^*)}} $ and $ B=P_{\overline{\mathscr{R}(B)}}A $. Hence, $ \mathscr{R}(B)\subseteq\mathscr{R}(A) $ and $ \mathscr{R}(B^*)\subseteq\mathscr{R}(A^*) $. It follows from $ \mathscr{R}(B)\subseteq\mathscr{R}(A) $ and Lemma \ref{AX=C} that $ AA^\dagger B=B $. Hence, $ A\widetilde{X}B=A(A^\dagger B)=AA^\dagger B=B $. It follows from $ \mathscr{R}(B^*)\subseteq\mathscr{R}(A^*) $ and Lemma \ref{Douglas} that there exists $ Z\in\mathbb{B}(\mathscr{H}) $ such that $ B=ZA $. Hence,
\begin{eqnarray*}
B\widetilde{X}A=(BA^\dagger )A=BA^\dagger A=ZAA^\dagger A=ZA=B.
\end{eqnarray*}
Therefore, $ \widetilde{X} $ is a hermitian solution of the system $ BXA=B=AXB $.
\end{proof}

\section{$*$-order via other operator equations}

 Generally speaking, the inequality $ PB\stackrel{*}{\leq}B $ dose not hold for any $ P\in\mathscr{P}(\mathscr{H})$ even if $ \mathscr{R}(P)\subseteq\overline{\mathscr{R}(B)} $. In \cite[Lemma 2.6]{Antezana}, some conditions are mentioned which give a one-sided description of the relation $ A\stackrel{*}{\leq}B $ regarding (\ref{m2}). \\
 The next result is known.
\begin{proposition} \label{p-order} \cite[Proposition 2.6]{Antezana} Let $ B\in\mathbb{B}(\mathscr{H})$.\\
\begin{enumerate}
 \item[(a)] If $ P\in\mathscr{OP}(\mathscr{H})$ and $\mathscr{R}(P)\subseteq\overline{\mathscr{R}(B)}, $ then $ PB\stackrel{*}{\leq}B $ if and only if $ PBB^*=BB^*P$.\\
 \item[(b)] If $ Q\in\mathscr{OP}(\mathscr{H})$ and $\mathscr{R}(Q)\subseteq\overline{\mathscr{R}(B^*)}, $ then $ BQ\stackrel{*}{\leq}B $ if and only if $ QB^*B=B^*BQ$.
\end{enumerate}
 \end{proposition}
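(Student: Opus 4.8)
The plan is to prove the two parts by the same computation: replacing $B$ by $B^*$, $P$ by $Q$, and reading the products from the right rather than the left turns (a) into (b), so I will describe (a) in detail. Put $A = PB$ and expand the two defining identities of $A \stackrel{*}{\leq} B$, namely $AA^* = BA^*$ and $A^*A = A^*B$, using that $P$ is an orthogonal projection, so $P^* = P = P^2$.

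First I would note that the relation $A^*A = A^*B$ is automatic: since $A^* = B^*P$, one has $A^*A = B^*P^2B = B^*PB = A^*B$ with no extra hypothesis (equivalently $A^*(A-B) = B^*P(P-I)B = 0$). Hence $PB \stackrel{*}{\leq} B$ is equivalent to the single identity $AA^* = BA^*$, and, again using $P^* = P^2 = P$, this reads $PBB^*P = BB^*P$. It then remains to identify this with the commutation relation $PBB^* = BB^*P$. The direction ($\Leftarrow$) is immediate: right-multiply $PBB^* = BB^*P$ by $P$ and use $P^2 = P$. For ($\Rightarrow$), take adjoints in $PBB^*P = BB^*P$; since $BB^*$ and $P$ are self-adjoint the left-hand side is unchanged and the right-hand side becomes $PBB^*$, so $PBB^* = PBB^*P = BB^*P$. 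Part (b) is the mirror image: with $A = BQ$ the relation $AA^* = BA^*$ becomes automatic ($BQ^2B^* = BQB^*$), while $A^*A = A^*B$ reduces to $QB^*BQ = QB^*B$, which the same adjoint argument shows is equivalent to $QB^*B = B^*BQ$.

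I do not expect a genuine obstacle; the argument is a short computation. The only things to watch are the bookkeeping — keeping track of which of the two $*$-order conditions is the vacuous one in each part — and the observation that the stated range hypotheses $\mathscr{R}(P) \subseteq \overline{\mathscr{R}(B)}$ and $\mathscr{R}(Q) \subseteq \overline{\mathscr{R}(B^*)}$ are not actually used in this equivalence (they pair with the complementary statement in \cite{Antezana} that every operator $*$-below $B$ arises in this form). If one prefers to stay inside the framework of the paper, the equivalences $A^*A = A^*B \Leftrightarrow A = P_{\overline{\mathscr{R}(A)}}B$ and $AA^* = BA^* \Leftrightarrow A = BP_{\overline{\mathscr{R}(A^*)}}$ from Lemma \ref{order}(a),(b) could be invoked in place of the direct manipulation, but the computation above is the most economical route.
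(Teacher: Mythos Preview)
Your argument is correct. Note, however, that the paper does not give its own proof of this proposition: it is quoted from \cite[Proposition~2.6]{Antezana} and stated without proof, so there is nothing in the paper to compare your proof against directly.

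That said, the paper immediately proves a generalization (the proposition following this one, for $PBQ\stackrel{*}{\leq}B$), and the computation there is essentially yours. In the forward direction the paper uses the consequence $AB^*=BA^*$ of $AA^*=BA^*$ (obtained by taking adjoints, since $AA^*$ is self-adjoint) to read off $PBQB^*=BQB^*P$ directly; in the backward direction it right-multiplies by $P$ exactly as you do. Specializing that argument to $Q=I$ recovers your proof of (a). Your observation that the range hypotheses $\mathscr{R}(P)\subseteq\overline{\mathscr{R}(B)}$ and $\mathscr{R}(Q)\subseteq\overline{\mathscr{R}(B^*)}$ are not used is also borne out: the paper's generalization drops them entirely.
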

 In the following, we state a generalization of Proposition \ref{p-order}.
\begin{proposition}
Let $ B\in\mathbb{B}(\mathscr{H})$. If there exist $ P, Q\in\mathscr{OP}(\mathscr{H})$ such that $ \mathscr{R}(P)\subseteq\overline{\mathscr{R}(B)} $and $ \mathscr{R}(Q)\subseteq\overline{\mathscr{R}(B^*)}, $
then $ PBQ\stackrel{*}{\leq}B $ if and only if $ PBQB^*=BQB^*P $ and $QB^*PB=B^*PBQ$.
\end{proposition}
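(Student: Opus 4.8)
The plan is to unwind the relation $PBQ\stackrel{*}{\leq}B$ straight from its definition (\ref{m2}), with $A:=PBQ$, and then to exploit that $P$ and $Q$ are \emph{orthogonal} projections, so that $P^{2}=P=P^{*}$ and $Q^{2}=Q=Q^{*}$. Since $A^{*}=QB^{*}P$, expanding the four products and using $P^{2}=P$, $Q^{2}=Q$ gives
\[
AA^{*}=PBQB^{*}P,\quad BA^{*}=BQB^{*}P,\quad A^{*}A=QB^{*}PBQ,\quad A^{*}B=QB^{*}PB ,
\]
so that $PBQ\stackrel{*}{\leq}B$ holds if and only if
\[
PBQB^{*}P=BQB^{*}P\qquad\text{and}\qquad QB^{*}PBQ=QB^{*}PB .
\]

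Next I would introduce $M:=BQB^{*}$ and $N:=B^{*}PB$, which are self-adjoint because $P^{*}=P$ and $Q^{*}=Q$, and rewrite the last two identities as $PMP=MP$ and $QNQ=QN$. The key elementary fact is that for an orthogonal projection $R$ and a self-adjoint $S$ one has $RSR=SR\Rightarrow SR=RS$: taking adjoints in $RSR=SR$ gives $RSR=RS$, and comparing the two equalities yields $SR=RS$. Applying this with $(R,S)=(P,M)$ and with $(R,S)=(Q,N)$ converts the two identities into $MP=PM$ and $NQ=QN$, that is,
\[
PBQB^{*}=BQB^{*}P\qquad\text{and}\qquad QB^{*}PB=B^{*}PBQ ,
\]
which is the desired condition. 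For the converse, assuming these two commutation relations, I would right-multiply the first by $P$ (so the right-hand side becomes $BQB^{*}P^{2}=BQB^{*}P$) to recover $PBQB^{*}P=BQB^{*}P$, and left-multiply the second by $Q$ to recover $QB^{*}PBQ=QB^{*}PB$; by the first step these are precisely the defining identities of $PBQ\stackrel{*}{\leq}B$.

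The hypotheses $\mathscr{R}(P)\subseteq\overline{\mathscr{R}(B)}$ and $\mathscr{R}(Q)\subseteq\overline{\mathscr{R}(B^{*})}$ guarantee that $\mathscr{R}(PBQ)\subseteq\overline{\mathscr{R}(B)}$ and $\mathscr{R}((PBQ)^{*})\subseteq\overline{\mathscr{R}(B^{*})}$, which keeps us in the natural setting and shows that the statement really does contain Proposition \ref{p-order}: taking $Q=P_{\overline{\mathscr{R}(B^{*})}}$ (and using $BP_{\overline{\mathscr{R}(B^{*})}}=B$) reduces the second commutation relation to a triviality and the first to $PBB^{*}=BB^{*}P$, and symmetrically for part (b). Alternatively, one could route the whole argument through Lemma \ref{order}(a),(b) together with Proposition \ref{p-order}. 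I do not foresee a genuine obstacle here: the only real content is the self-adjointness trick in the forward direction, which is exactly where orthogonality of $P$ and $Q$ enters essentially, together with careful bookkeeping of the relations $P^{2}=P=P^{*}$ and $Q^{2}=Q=Q^{*}$ while expanding the four products.
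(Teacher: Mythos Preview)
Your argument is correct and is essentially the same as the paper's. Both proofs reduce $PBQ\stackrel{*}{\leq}B$ to $PBQB^{*}P=BQB^{*}P$ and $QB^{*}PBQ=QB^{*}PB$, then use the adjoint trick (which you phrase as ``$RSR=SR\Rightarrow RS=SR$ for $R$ an orthogonal projection and $S$ self-adjoint'', and the paper phrases as the one-line consequence $AB^{*}=BA^{*}$ of $AA^{*}=BA^{*}$) for the forward direction, and multiply by the relevant projection for the converse. Your observation that the range hypotheses on $P$ and $Q$ play no role in the actual equivalence is also accurate: the paper's proof does not use them either.
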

\begin{proof}
$(\Longrightarrow)$: Let $ PBQ\stackrel{*}{\leq}B $. Applying (\ref{m2}), we get that
\begin{eqnarray*}
 PBQB^*=(PBQ)B^*=B(PBQ)^*=BQB^*P
 \end{eqnarray*} and
 \begin{eqnarray*}
 B^*PBQ=B^*(PBQ)=(PBQ)^*B=QB^*PB.
\end{eqnarray*}
 $( \Longleftarrow)$: Let $ PBQB^*=BQB^*P $ and $QB^*PB=B^*PBQ $. Applying (\ref{m2}), we obtain that
 \begin{eqnarray*}
 (PBQ)(PBQ)^*=PBQB^*P=(BQB^*P)P=BQB^*P=B(PBQ)^*
\end{eqnarray*} and
\begin{eqnarray*}
(PBQ)^*(PBQ)=QB^*PBQ=Q(QB^*PB)=QB^*PB=(PBQ)^*B.
\end{eqnarray*}
\end{proof}

The next known theorem gives a characterization of the order $ \stackrel{*}{\leq} $.
 \begin{theorem}\cite[Theorem 2.3]{Deng}\label{Deng}
 Let $ A\in\mathbb{B}(\mathscr{H})$ and $C\in\mathscr{Q}(\mathscr{H})$. Then, $ C\stackrel{*}{\leq}A $ if and only if there exists $ X\in\mathbb{B}(\mathscr{H})$ such that $ A=C+(I-C^*)X(I-C^*)$.
\end{theorem}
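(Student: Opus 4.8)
The plan is to prove both implications by direct computation, exploiting the single structural fact that $C\in\mathscr{Q}(\mathscr{H})$ forces $C^*$ to be idempotent as well: $(C^*)^2=(C^2)^*=C^*$, so that $(I-C^*)C^*=0$ and $C^*(I-C^*)=0$. Everything else is a two-line manipulation of the defining identities $CC^*=AC^*$ and $C^*C=C^*A$ of the $*$-order in (\ref{m2}).

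For the implication $(\Longleftarrow)$, assume $A=C+(I-C^*)X(I-C^*)$ for some $X\in\mathbb{B}(\mathscr{H})$. Multiplying on the right by $C^*$ and invoking $(I-C^*)C^*=0$ yields $AC^*=CC^*$; multiplying on the left by $C^*$ and invoking $C^*(I-C^*)=0$ yields $C^*A=C^*C$. By (\ref{m2}) this says precisely $C\stackrel{*}{\leq}A$.

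For the implication $(\Longrightarrow)$, assume $C\stackrel{*}{\leq}A$, i.e.\ $CC^*=AC^*$ and $C^*C=C^*A$. I claim that $X=A-C$ works. Using $C^*A=C^*C$ one gets $(I-C^*)(A-C)=A-C-C^*A+C^*C=A-C$; then, using $AC^*=CC^*$, one gets $(A-C)(I-C^*)=A-C-AC^*+CC^*=A-C$. Chaining the two identities gives $(I-C^*)(A-C)(I-C^*)=A-C$, hence $C+(I-C^*)(A-C)(I-C^*)=A$, which is the desired representation.

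The argument has essentially no obstacle; the only two things to be attentive to are that idempotency of $C$ transfers to $C^*$, and that the two defining relations of the $*$-order must be used in the right slots (the relation $C^*A=C^*C$ collapses the left factor $I-C^*$, while $AC^*=CC^*$ collapses the right factor). One could instead route the proof through Lemma \ref{order}(c), rewriting $P_{\mathscr{N}(C^*)}AP_{\mathscr{N}(C)}$ in terms of $I-C^*$, but since $I-C^*$ need not be an orthogonal projection this is a detour, and the direct verification above is cleaner and self-contained.
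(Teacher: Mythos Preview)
Your proof is correct. Both implications are verified cleanly: the idempotency of $C^*$ (inherited from that of $C$) makes the factors $I-C^*$ annihilate $C^*$ on either side, which is exactly what is needed to collapse the cross terms.

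There is nothing to compare against, however: the paper does not supply its own proof of this theorem but merely quotes it from \cite[Theorem~2.3]{Deng}. Your argument is a self-contained verification, and in fact the choice $X=A-C$ is the natural one; it also matches the spirit of the paper's Lemma~\ref{order}(c), since $A-C=P_{\mathscr{N}(C^*)}AP_{\mathscr{N}(C)}$ whenever $C\stackrel{*}{\leq}A$, and for an idempotent $C$ the (oblique) projection onto $\mathscr{N}(C^*)$ along $\mathscr{R}(C^*)$ is precisely $I-C^*$.
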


In the following, we establish an analogue of Theorem \ref{Deng} for generalized projections on a Hilbert space. Recall that an operator $ A\in\mathbb{B}(\mathscr{H})$ is a generalized projection if $ A^2=A^*$.

\begin{lemma}\label{generalization} \cite[Theorem A.2]{radosavljevic}
Let $ A\in\mathbb{B}(\mathscr{H}) $ be a generalized projection.Then, $ A $ is a closed range operator and $ A^3 $ is an orthogonal projection on $ \mathscr{R}(A) $. Moreover, $ \mathscr{H} $ has decomposition
\begin{eqnarray*}
\mathscr{H}=\mathscr{R}(A)\bigoplus \mathscr{N}(A)
\end{eqnarray*} and $ A $ has the following matrix representation
\begin{eqnarray*}
  A=\left [  \begin{array}{cc}
 A_{1} & 0\\
  0 & 0
 \end{array}  \right]:
 \left [  \begin{array}{cc}
 \mathscr{R}(A)\\
 \mathscr{N}(A)
 \end{array}  \right]
 \rightarrow
 \left [  \begin{array}{cc}
 \mathscr{R}(A)\\
  \mathscr{N}(A)
 \end{array}  \right],
\end{eqnarray*} where the restriction $ A_{1}=A|_{\mathscr{R}(A)} $ is unitary on $ \mathscr{R}(A) $.
\end{lemma}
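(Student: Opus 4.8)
The plan is to extract everything from the polynomial identity forced by $A^{2}=A^{*}$ and then read off the geometry. First I would take adjoints: applying $*$ to $A^{2}=A^{*}$ gives $(A^{2})^{*}=A$, that is $(A^{*})^{2}=A$, and substituting $A^{*}=A^{2}$ yields $A^{4}=A$. This single relation $A^{4}=A$ is the engine for the whole statement.

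Next I would study $A^{3}$. From $A^{4}=A$ we get $A^{6}=A^{4}A^{2}=A\cdot A^{2}=A^{3}$, so $(A^{3})^{2}=A^{6}=A^{3}$ and $A^{3}$ is idempotent; also $(A^{3})^{*}=(A^{*})^{3}=(A^{2})^{3}=A^{6}=A^{3}$, so $A^{3}$ is self-adjoint. An idempotent self-adjoint operator is an orthogonal projection. Furthermore $A=A^{4}=A^{3}A$ shows $\mathscr{R}(A)\subseteq\mathscr{R}(A^{3})$, while $\mathscr{R}(A^{3})\subseteq\mathscr{R}(A)$ is trivial; hence $\mathscr{R}(A^{3})=\mathscr{R}(A)$. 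Being the range of a projection, $\mathscr{R}(A)$ is closed, and $A^{3}=P_{\mathscr{R}(A)}$, the orthogonal projection onto $\mathscr{R}(A)$.

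For the decomposition I would verify $\mathscr{N}(A)=\mathscr{N}(A^{3})$: if $A^{3}x=0$ then $Ax=A^{4}x=A(A^{3}x)=0$, and conversely $Ax=0$ forces $A^{3}x=0$. Since $A^{3}$ is the orthogonal projection onto $\mathscr{R}(A)$ we have $\mathscr{N}(A^{3})=\mathscr{R}(A)^{\perp}$, so $\mathscr{N}(A)=\mathscr{R}(A)^{\perp}$ and therefore $\mathscr{H}=\mathscr{R}(A)\oplus\mathscr{N}(A)$ is an orthogonal direct sum. With respect to this decomposition $A$ maps everything into $\mathscr{R}(A)$ and kills $\mathscr{N}(A)=\mathscr{R}(A)^{\perp}$, so its block matrix is $A=A_{1}\bigoplus 0$ with $A_{1}=A|_{\mathscr{R}(A)}\colon\mathscr{R}(A)\to\mathscr{R}(A)$; moreover $\mathscr{N}(A_{1})=\mathscr{R}(A)\cap\mathscr{N}(A)=\{0\}$ and $\mathscr{R}(A_{1})=\mathscr{R}(A)$, so $A_{1}$ is invertible. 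Reading $A^{2}=A^{*}$ blockwise gives $A_{1}^{2}=A_{1}^{*}$, and the global relation restricts to $A_{1}^{4}=A_{1}$; since $A_{1}$ is invertible this yields $A_{1}^{3}=I$, hence $A_{1}^{*}=A_{1}^{2}=A_{1}^{-1}$, i.e. $A_{1}$ is unitary on $\mathscr{R}(A)$.

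There is no genuine obstacle here: the proof is elementary operator algebra once $A^{4}=A$ is established. The only places that need a little care are the bookkeeping that makes the block representation legitimate --- namely, that $\mathscr{R}(A)$ is closed (as the range of a projection) and that the decomposition $\mathscr{H}=\mathscr{R}(A)\oplus\mathscr{N}(A)$ is the \emph{orthogonal} one, which is exactly what $\mathscr{N}(A)=\mathscr{R}(A)^{\perp}$ provides --- and the final step, where unitarity of $A_{1}$ drops out of invertibility together with $A_{1}^{3}=I$.
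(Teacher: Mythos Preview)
Your proof is correct. The paper does not supply its own argument for this lemma: it is quoted as \cite[Theorem A.2]{radosavljevic} and used without proof. Your derivation of $A^{4}=A$ from $A^{2}=A^{*}$, the identification of $A^{3}$ as the orthogonal projection onto $\mathscr{R}(A)$, and the consequent block decomposition with unitary $A_{1}$ are all sound; the one step you state a bit tersely, $\mathscr{R}(A_{1})=\mathscr{R}(A)$, follows immediately from $\mathscr{R}(A)=A(\mathscr{H})=A(\mathscr{R}(A)\oplus\mathscr{N}(A))=A(\mathscr{R}(A))$.
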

\begin{theorem}
 Let $ A\in\mathbb{B}(\mathscr{H})$ and $B\in\mathscr{GP}(\mathscr{H})$. Then, $ B\stackrel{*}{\leq}A $ if and only if there exists $ X\in\mathbb{B}(\mathscr{H})$ such that $ A=B+(I-BB^*)X(I-B^*B)$.
\end{theorem}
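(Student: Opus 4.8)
The plan is to reduce the whole statement to a direct application of Lemma~\ref{order}(c), once the two factors $I-BB^*$ and $I-B^*B$ are correctly identified. First I would record the elementary identities forced by $B^2=B^*$: since $B^*=B^2$ one gets $BB^*=B\cdot B^2=B^3$ and $B^*B=B^2\cdot B=B^3$, so in particular $B$ is normal and $BB^*=B^*B=B^3$. By Lemma~\ref{generalization}, $B$ has closed range, $B^3$ is the orthogonal projection onto $\mathscr{R}(B)$, and $\mathscr{H}=\mathscr{R}(B)\bigoplus\mathscr{N}(B)$ is an orthogonal decomposition, so $P_{\mathscr{R}(B)}+P_{\mathscr{N}(B)}=I$. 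Hence
$I-BB^*=I-B^*B=I-P_{\mathscr{R}(B)}=P_{\mathscr{N}(B)}$.
Moreover, because $B$ is normal, $\mathscr{N}(B^*)=\mathscr{N}(B)$, so also $P_{\mathscr{N}(B^*)}=P_{\mathscr{N}(B)}$.

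With these identities in hand the forward implication is immediate. If $B\stackrel{*}{\leq}A$, then Lemma~\ref{order}(c) (applied with the roles of its operators exchanged) gives $A=B+P_{\mathscr{N}(B^*)}AP_{\mathscr{N}(B)}=B+P_{\mathscr{N}(B)}AP_{\mathscr{N}(B)}=B+(I-BB^*)A(I-B^*B)$, so one may simply take $X=A$.

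For the converse, suppose $A=B+(I-BB^*)X(I-B^*B)=B+P_{\mathscr{N}(B)}XP_{\mathscr{N}(B)}$ for some $X\in\mathbb{B}(\mathscr{H})$. I would compress this equality by $P_{\mathscr{N}(B)}$ on both sides; using $BP_{\mathscr{N}(B)}=0$ and the idempotency of $P_{\mathscr{N}(B)}$ one obtains $P_{\mathscr{N}(B)}AP_{\mathscr{N}(B)}=P_{\mathscr{N}(B)}XP_{\mathscr{N}(B)}$, and substituting this back yields $A=B+P_{\mathscr{N}(B)}AP_{\mathscr{N}(B)}=B+P_{\mathscr{N}(B^*)}AP_{\mathscr{N}(B)}$; Lemma~\ref{order}(c) then delivers $B\stackrel{*}{\leq}A$. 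Alternatively, one can verify $AB^*=BB^*$ and $B^*A=B^*B$ directly, since $B^*(I-BB^*)=B^*-B^*B^3=B^*-B^*P_{\mathscr{R}(B)}=0$ and likewise $(I-B^*B)B^*=0$ (using $\mathscr{R}(B^*)\subseteq\mathscr{R}(B)$), so that $(A-B)B^*=0=B^*(A-B)$, which is exactly $B\stackrel{*}{\leq}A$ by \eqref{m2}.

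The only nontrivial input is Lemma~\ref{generalization}: once it supplies $B^3=P_{\mathscr{R}(B)}$ together with the orthogonal splitting $\mathscr{H}=\mathscr{R}(B)\bigoplus\mathscr{N}(B)$, the translation $I-BB^*=I-B^*B=P_{\mathscr{N}(B)}=P_{\mathscr{N}(B^*)}$ is forced and the claim is a transcription of Lemma~\ref{order}(c). So I do not anticipate a genuine obstacle; the only point demanding care is checking that $\mathscr{N}(B^*)$ and $\mathscr{N}(B)$ coincide (equivalently, that $B$ is normal), since it is this coincidence that makes the projection $P_{\mathscr{N}(B^*)}$ appearing in Lemma~\ref{order}(c) match the factor $I-BB^*$ in the asserted formula.
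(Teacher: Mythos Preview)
Your proof is correct and follows essentially the same route as the paper. For the forward implication the paper also invokes Lemma~\ref{generalization} to get $B^3=P_{\mathscr{R}(B)}$ and then identifies $I-BB^*=I-B^*B=P_{\mathscr{N}(B)}=P_{\mathscr{N}(B^*)}$ before applying Lemma~\ref{order}(c) with $X=A$; for the converse the paper uses exactly your ``alternative'' direct verification $B^*(I-BB^*)=0=(I-B^*B)B^*$ (equivalently $B^*BB^*=B^*$) to conclude $B^*A=B^*B$ and $AB^*=BB^*$.
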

\begin{proof}
$ (\Longrightarrow)$: Let $ B\in\mathscr{GP}(\mathscr{H})$ and $B\stackrel{*}{\leq}A $. Employing Lemma \ref{generalization}, we infer that $B$ has closed range and $B^3=P_{\mathscr{R}(B)}$. It follows from (\ref{m1}) that
\begin{eqnarray*}
\mathscr{R}(B^*)=\mathscr{R}(B^*B)=\mathscr{R}(B^3)=\mathscr{R}(BB^*)=\mathscr{R}(B).
\end{eqnarray*} Hence, $P_{\mathscr{R}(B)}=P_{\mathscr{R}(B^*)}=BB^*=B^*B $. Therefore, $P_{\mathscr{N}(B)}=P_{\mathscr{N}(B^*)}=I-BB^*=I-B^*B$.
Applying Lemma \ref{order}(c), we get $A=B+P_{\mathscr{N}(B^*)}AP_{\mathscr{N}(B)} $. Hence, $A=B+(I-BB^*)A(I-B^*B)$.\\
$(\Longleftarrow)$: Let $ X\in\mathbb{B}(\mathscr{H})$ be a solution of the equation $ A=B+(I-BB^*)X(I-B^*B)$. Since $B$ is a generalized projection, so $ B^*BB^*=B^* $. Hence,
 \begin{eqnarray*}
 B^*A=B^*B+B^*(I-BB^*)X(I-B^*B)= B^*B
\end{eqnarray*}and
\begin{eqnarray*}
AB^*=BB^*+(I-BB^*)X(I-B^*B)B^*= BB^*.
\end{eqnarray*} Therefore, $B\stackrel{*}{\leq}A $ by (\ref{m2}).
\end{proof}
In the next result, we show that if $A$ is a generalized projection and $B\stackrel{*}{ \leq}A\stackrel{*}{\wedge}A^* $, then $ AA^*$ can be written as the sum of two idempotents.
\begin{theorem}\label{AA^*}
Let $ A\in\mathscr{GP}(\mathscr{H})$ and $B\in\mathbb{B}(\mathscr{H})$. If $ B\stackrel{*}{ \leq}A\stackrel{*}{\wedge}A^* $, then $B$ is an idempotent and there exist an idempotent $X$ such that $ AA^*=B+X $ and $B^*X=XB^*=0 $.
\end{theorem}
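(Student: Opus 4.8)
The plan is to use only two consequences of the hypothesis --- that $B$ is a common $\stackrel{*}{\leq}$-lower bound of $A$ and $A^*$, and that $A^2=A^*$ --- and never to describe $A\stackrel{*}{\wedge}A^*$ explicitly. Since $A\stackrel{*}{\wedge}A^*$ is by definition the greatest lower bound of $A$ and $A^*$ in the partially ordered set $(\mathbb{B}(\mathscr{H}),\stackrel{*}{\leq})$, the hypothesis $B\stackrel{*}{\leq}A\stackrel{*}{\wedge}A^*$ together with transitivity of $\stackrel{*}{\leq}$ gives $B\stackrel{*}{\leq}A$ and $B\stackrel{*}{\leq}A^*$. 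Since $A\in\mathscr{GP}(\mathscr{H})$, Lemma \ref{generalization} tells us that $A$ has closed range and that $A^3$ is the orthogonal projection onto $\mathscr{R}(A)$; combining this with $A^*=A^2$ gives $AA^*=A^*A=A^3=P_{\mathscr{R}(A)}$.

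The key step is to prove $AB=B=BA$. Applying Lemma \ref{order}(d) to $B\stackrel{*}{\leq}A$ yields $B=P_{\overline{\mathscr{R}(B)}}A=AP_{\overline{\mathscr{R}(B^*)}}$, and applying it to $B\stackrel{*}{\leq}A^*$ yields $B=P_{\overline{\mathscr{R}(B)}}A^*=A^*P_{\overline{\mathscr{R}(B^*)}}$. Hence $AB=A\bigl(AP_{\overline{\mathscr{R}(B^*)}}\bigr)=A^2P_{\overline{\mathscr{R}(B^*)}}=A^*P_{\overline{\mathscr{R}(B^*)}}=B$, and symmetrically $BA=\bigl(P_{\overline{\mathscr{R}(B)}}A\bigr)A=P_{\overline{\mathscr{R}(B)}}A^2=P_{\overline{\mathscr{R}(B)}}A^*=B$. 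This is the point where $A^2=A^*$ is used to fuse the two order relations, and it is the heart of the argument; the rest is formal.

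Next I would show that $B$ is an orthogonal projection. Taking adjoints in $BA=B$ gives $A^*B^*=B^*$, while the defining relation (\ref{m2}) for $B\stackrel{*}{\leq}A^*$ gives $BB^*=A^*B^*$; therefore $BB^*=B^*$, and a further adjoint gives $BB^*=B$. Thus $B=B^*=BB^*=B^2$, so $B$ is a self-adjoint idempotent; in particular $B$ is idempotent.

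Finally I would construct $X$. From $B=AP_{\overline{\mathscr{R}(B^*)}}$ we have $\mathscr{R}(B)\subseteq\mathscr{R}(A)$; since $AA^*=P_{\mathscr{R}(A)}$, this gives $AA^*B=B$ and, taking adjoints (using $B^*=B$), $B\,AA^*=B$. Now put $X:=AA^*-B$. Then $AA^*=B+X$; moreover $X^2=(AA^*)^2-(AA^*)B-B(AA^*)+B^2=AA^*-B=X$, so $X$ is idempotent; and $B^*X=B(AA^*-B)=B-B=0$, $XB^*=(AA^*-B)B=B-B=0$. The only place calling for care is the second paragraph --- one must invoke $A^2=A^*$ at the right moment instead of trying to describe the infimum directly --- and once $AB=B=BA$ is established there is no genuine obstacle.
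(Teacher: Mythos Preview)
Your proof is correct and follows essentially the same route as the paper's: both deduce $B\stackrel{*}{\leq}A$ and $B\stackrel{*}{\leq}A^*$ from the hypothesis, use Lemma~\ref{order}(d) together with $A^2=A^*$ to obtain $AB=B=BA$ (the paper also records $A^*B=B=BA^*$), show $B$ is idempotent, set $X=AA^*-B$, and verify $X^2=X$ and $B^*X=XB^*=0$. One small difference is that you additionally observe $BB^*=A^*B^*=B^*$, whence $B=B^*$ is in fact an orthogonal projection---a mild sharpening the paper does not state---and you use this self-adjointness to shorten the verification of $B^*X=XB^*=0$.
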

\begin{proof}
 Let $ B\stackrel{*}{ \leq}A\stackrel{*}{\wedge}A^* $. It follows from the assumption $ A^2=A^* $ and Lemma \ref{order}(d) that
\begin{eqnarray*}
 B^2=(P_{\overline{\mathscr{R}(B)}}A^*)(A^*P_{\overline{\mathscr{R}(B^*)}})=P_{\overline{\mathscr{R}(B)}}{A^*}^2P_{\overline{\mathscr{R}(B^*)}}=P_{\overline{\mathscr{R}(B)}}AP_{\overline{\mathscr{R}(B^*)}}=BP_{\overline{\mathscr{R}(B^*)}}=B.
\end{eqnarray*}
Using Lemma \ref{order}, we get that
 \begin{eqnarray*}
 AB=A(AP_{\overline{\mathscr{R}(B^*)}})=A^2P_{\overline{\mathscr{R}(B^*)}}=A^*P_{\overline{\mathscr{R}(B^*)}}=B,
 \end{eqnarray*}
 \begin{eqnarray*}
 BA=(P_{\overline{\mathscr{R}(B)}}A)A=P_{\overline{\mathscr{R}(B)}}A^2=P_{\overline{\mathscr{R}(B)}}A^*=B,
 \end{eqnarray*}
 \begin{eqnarray*}
 A^*B=A^*(A^*P_{\overline{\mathscr{R}(B^*)}})={A^*}^2P_{\overline{\mathscr{R}(B^*)}}=AP_{\overline{\mathscr{R}(B^*)}}=B
\end{eqnarray*} and
 \begin{eqnarray*}
 BA^*=(P_{\overline{\mathscr{R}(B)}}A^*)A^*=P_{\overline{\mathscr{R}(B)}}{A^*}^2=P_{\overline{\mathscr{R}(B)}}A=B.
\end{eqnarray*}
Let $ X=AA^*-B $. It follows from the assumption $ B\stackrel{*}{ \leq}A\stackrel{*}{\wedge}A^* $ that
\begin{eqnarray*}
X^2=(AA^*-B)^2&=&(AA^*)^2+B^2-AA^*B-BAA^*\\
&=&AA^*+B-AB-BA^*\\
&=&AA^*+B-B-B=AA^*-B=X.
\end{eqnarray*}
 Hence, $X$ is an idempotent. Applying (\ref{m2}), we have
\begin{eqnarray*}
 B^*X=B^*(AA^*-B)=B^*AA^*-B^*B=B^*A^*A-B^*B=B^*A-B^*B=0
\end{eqnarray*}and
\begin{eqnarray*}
XB^*=(AA^*-B)B^*=AA^*B^*-BB^*=AB^*-BB^*=0.
\end{eqnarray*}
\end{proof}

\begin{lemma}\label{idempotent}
Let $ A\in\mathscr{Q}(\mathscr{H})$ and $B\in\mathbb{B}(\mathscr{H})$. Then, $ B\stackrel{*}{ \leq}A $ if and only if $B$ is an idempotent and there exists an idempotent $X$ such that $ A=B+X $ and $B^*X=XB^*=0$.
\end{lemma}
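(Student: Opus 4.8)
The plan is to follow the template of the proof of Theorem \ref{AA^*}, replacing the generalized-projection identity $A^2=A^*$ by the idempotency $A^2=A$ and combining it with the one-sided descriptions of the $*$-order recorded in Lemma \ref{order}(d).

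For the implication $(\Longrightarrow)$ I would start from $B\stackrel{*}{\leq}A$ and use Lemma \ref{order}(d) to write $B=P_{\overline{\mathscr{R}(B)}}A=A\,P_{\overline{\mathscr{R}(B^*)}}$. Multiplying the two representations and invoking $A^2=A$ gives $B^2=P_{\overline{\mathscr{R}(B)}}AP_{\overline{\mathscr{R}(B^*)}}=B$ (the last step again by Lemma \ref{order}(d)), so $B$ is an idempotent; the same bookkeeping yields $AB=A^2P_{\overline{\mathscr{R}(B^*)}}=B$ and $BA=P_{\overline{\mathscr{R}(B)}}A^2=B$. I then set $X:=A-B$, so that $A=B+X$ and $X^2=A^2-AB-BA+B^2=A-B=X$, i.e. $X$ is an idempotent. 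Finally, the defining relations (\ref{m2}) of the $*$-order give $XB^*=AB^*-BB^*=0$ and $B^*X=B^*A-B^*B=0$.

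For the converse $(\Longleftarrow)$, suppose $B$ and $X$ are idempotents with $A=B+X$ and $B^*X=XB^*=0$. Then $AB^*=BB^*+XB^*=BB^*$ and $B^*A=B^*B+B^*X=B^*B$, which is exactly (\ref{m2}); hence $B\stackrel{*}{\leq}A$. Note that this direction uses only the relation $A=B+X$ together with the two orthogonality conditions, not the idempotency of $B$ or $X$.

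Most of this is mechanical; the one place that calls for care is the forward direction, where both halves of Lemma \ref{order}(d) must be used simultaneously, in conjunction with $A^2=A$, to extract the three identities $B^2=B$ and $AB=BA=B$. Once these are available, the idempotency of $X=A-B$ and the vanishing of $B^*X$ and $XB^*$ are immediate.
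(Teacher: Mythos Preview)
Your argument is correct and follows essentially the same route as the paper: both directions rely on Lemma~\ref{order}(d) together with $A^2=A$ to obtain $B^2=B$, $AB=BA=B$, then set $X=A-B$ and verify $X^2=X$ and $B^*X=XB^*=0$ via~(\ref{m2}), while the converse is the same two-line check. If anything, your write-up is slightly more explicit than the paper's, which simply asserts that $X=A-B$ is idempotent without displaying the computation $X^2=A^2-AB-BA+B^2=A-B$.
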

\begin{proof}
$( \Longrightarrow)$: Let $ B\stackrel{*}{ \leq}A $. It follows from the assumption $ A^2=A $ and Lemma \ref{order}(d) that
\begin{eqnarray*}
 B^2=(P_{\overline{\mathscr{R}(B)}}A)(AP_{\overline{\mathscr{R}(B^*)}})=P_{\overline{\mathscr{R}(B)}}A^2P_{\overline{\mathscr{R}(B^*)}}=(P_{\overline{\mathscr{R}(B)}}A)P_{\overline{\mathscr{R}(B^*)}}=BP_{\overline{\mathscr{R}(B^*)}}=B.
\end{eqnarray*}
Utilizing Lemma \ref{order}(d), we obtain that
 \begin{eqnarray*}
 AB=A(AP_{\overline{\mathscr{R}(B^*)}})=A^2P_{\overline{\mathscr{R}(B^*)}}=AP_{\overline{\mathscr{R}(B^*)}}=B
 \end{eqnarray*} and
 \begin{eqnarray*}
 BA=(P_{\overline{\mathscr{R}(B)}}A)A=P_{\overline{\mathscr{R}(B)}}A^2=P_{\overline{\mathscr{R}(B)}}A=B.
\end{eqnarray*} Hence, $X=A-B $ is an idempotent and $B^*X=B^*(A-B)=0 $ and $XB^*=(A-B)B^*=0 $.\\
$(\Longleftarrow)$: Let $ A=B+X $ and $B^*X=XB^*=0 $ for some idempotent $X$. Then, $ B^*(A-B)=B^*X=0 $ and $(A-B)B^*=XB^*=0 $. Therefore, $B\stackrel{*}{\leq}A $ by (\ref{m2}).
\end{proof}

\begin{corollary}
Let $ A\in\mathscr{GP}(\mathscr{H})$ and $B\in\mathbb{B}(\mathscr{H})$. Then, $ B\stackrel{*}{ \leq}AA^* $ if and only if $B$ is an idempotent and there exists an idempotent $X$ such that $ AA^*=B+X $ and $B^*X=XB^*=0$.
\end{corollary}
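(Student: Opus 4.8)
The plan is to reduce the statement to Lemma \ref{idempotent} by first observing that the hypothesis $A\in\mathscr{GP}(\mathscr{H})$ forces $AA^*$ to be an idempotent, so that Lemma \ref{idempotent} applies verbatim with $AA^*$ playing the role of the operator called ``$A$'' there.

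First I would record the identity $AA^*=AA^2=A^3$, which uses only the defining relation $A^*=A^2$ of a generalized projection. Then, invoking Lemma \ref{generalization}, the operator $A^3$ is an orthogonal projection onto $\mathscr{R}(A)$; in particular $(AA^*)^2=A^6=A^3=AA^*$, so $AA^*\in\mathscr{Q}(\mathscr{H})$. (In fact $AA^*$ is self-adjoint, but only idempotency is needed for the reduction.)

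With $AA^*\in\mathscr{Q}(\mathscr{H})$ and $B\in\mathbb{B}(\mathscr{H})$ in hand, Lemma \ref{idempotent} applied to the pair $(AA^*,B)$ yields immediately that $B\stackrel{*}{\leq}AA^*$ holds if and only if $B$ is an idempotent and there exists an idempotent $X$ with $AA^*=B+X$ and $B^*X=XB^*=0$, which is exactly the assertion of the corollary.

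The only substantive point — and it is a small one — is the passage $AA^*=A^3$ combined with the structure theorem Lemma \ref{generalization} that makes $A^3$ a projection; once idempotency of $AA^*$ is established, the proof is a direct citation of Lemma \ref{idempotent} with no further computation. I therefore expect no real obstacle beyond making this identification explicit.
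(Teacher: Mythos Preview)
Your proposal is correct and follows essentially the same approach as the paper: establish that $AA^*$ is idempotent and then invoke Lemma~\ref{idempotent} with $AA^*$ in place of $A$. The only difference is cosmetic---the paper verifies $(AA^*)^2=AA^*$ by a direct one-line computation (using $A^4=A$, which follows from $A^2=A^*$), whereas you route the same fact through $AA^*=A^3$ and Lemma~\ref{generalization}; either way the reduction to Lemma~\ref{idempotent} is immediate.
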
\label{corollary}
\begin{proof}
Let $ A\in\mathscr{GP}(\mathscr{H})$. Then, $ (AA^*)^2=AA^*AA^*=AA^* $. Hence, $AA^* $ is an idempotent. Now apply Lemma \ref{idempotent}.
\end{proof} 

We end our work with the following result.

\begin{proposition}
Let $ A\in\mathbb{B}(\mathscr{H})$ and $C\in\mathscr{GP}(\mathscr{H})$. Then, $ B\in\mathbb{B}(\mathscr{H})$ is common $ *- $ lower bound of $A$ and $CC^* $ if and only if $B$ is an idempotent and there exist $ X, Y\in\mathbb{B}(\mathscr{H})$ such that
\begin{eqnarray*}
A=B+(I-B^*)X(I-B^*), {\rm ~and~} CC^*=B+Y,
\end{eqnarray*}
where $B^*Y=YB^*=0 $.
\end{proposition}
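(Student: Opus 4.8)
The plan is to decouple the two requirements hidden in ``common $*$-lower bound'', namely $B\stackrel{*}{\leq}A$ and $B\stackrel{*}{\leq}CC^{*}$, and to treat each one with a result already available. The preliminary observation is that, because $C\in\mathscr{GP}(\mathscr{H})$, the operator $CC^{*}$ is an idempotent: this is precisely the computation $(CC^{*})^{2}=CC^{*}CC^{*}=CC^{*}$ recorded in the proof of Corollary~\ref{corollary} (alternatively it follows from the matrix form of $C$ in Lemma~\ref{generalization}). Thus $CC^{*}\in\mathscr{Q}(\mathscr{H})$, which is exactly the hypothesis needed to apply Lemma~\ref{idempotent} with $CC^{*}$ in the role of the idempotent operator.

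For the forward implication, assume $B\stackrel{*}{\leq}A$ and $B\stackrel{*}{\leq}CC^{*}$. Applying Lemma~\ref{idempotent} to the relation $B\stackrel{*}{\leq}CC^{*}$ (legitimate since $CC^{*}$ is idempotent) we conclude, in one stroke, that $B$ is an idempotent and that there is an (idempotent) operator $Y$ with $CC^{*}=B+Y$ and $B^{*}Y=YB^{*}=0$. Now that $B\in\mathscr{Q}(\mathscr{H})$ is established, the remaining relation $B\stackrel{*}{\leq}A$ together with Theorem~\ref{Deng} furnishes an $X\in\mathbb{B}(\mathscr{H})$ with $A=B+(I-B^{*})X(I-B^{*})$. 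This gives the asserted representation.

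For the converse, suppose $B$ is an idempotent, $A=B+(I-B^{*})X(I-B^{*})$, and $CC^{*}=B+Y$ with $B^{*}Y=YB^{*}=0$. Since $B\in\mathscr{Q}(\mathscr{H})$, Theorem~\ref{Deng} (used in the nontrivial direction) gives $B\stackrel{*}{\leq}A$ at once. For the second relation, from $CC^{*}-B=Y$ we read off $B^{*}(CC^{*}-B)=B^{*}Y=0$ and $(CC^{*}-B)B^{*}=YB^{*}=0$, i.e. $B^{*}CC^{*}=B^{*}B$ and $CC^{*}B^{*}=BB^{*}$; by the definition~(\ref{m2}) this is exactly $B\stackrel{*}{\leq}CC^{*}$. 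Hence $B$ is a common $*$-lower bound of $A$ and $CC^{*}$.

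I do not expect any genuine obstacle: the statement is a clean assembly of Theorem~\ref{Deng} and Lemma~\ref{idempotent} once one notes that $CC^{*}$ is an idempotent. The only points needing a little care are the order of the two invocations in the forward direction --- one must first harvest the idempotency of $B$ from the $CC^{*}$ side, because Theorem~\ref{Deng} requires $B\in\mathscr{Q}(\mathscr{H})$ before it can be applied to the $A$ side --- and the observation that the converse never uses idempotency of $Y$, so the weaker hypothesis ``$B^{*}Y=YB^{*}=0$'' stated in the proposition is indeed enough.
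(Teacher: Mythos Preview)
Your argument is correct and follows essentially the same route as the paper: first note that $CC^{*}$ is idempotent, then use Lemma~\ref{idempotent} on the $CC^{*}$ side to extract both the idempotency of $B$ and the operator $Y$, and finally invoke Theorem~\ref{Deng} on the $A$ side. The only cosmetic difference is that in the converse for $B\stackrel{*}{\leq}A$ the paper verifies $B^{*}(A-B)=0=(A-B)B^{*}$ directly rather than citing Theorem~\ref{Deng}, which amounts to the same thing.
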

\begin{proof}
$(\Longrightarrow)$: If $B$ be a common $ *- $ lower bound of $A$ and $CC^* $, then $ B\stackrel{*}{\leq}A $ and $B\stackrel{*}{\leq}CC^* $. It follows from the assumption $ B\stackrel{*}{\leq}CC^* $ and Lemma \ref{idempotent} that $B$ is an idempotent and there exists an idempotent $ Y\in\mathbb{B}(\mathscr{H})$ such that $ CC^*=B+R $, where $B^*R=RB^*=0 $. Since $B$ is an idempotent and $B\stackrel{*}{\leq}A $, by Theorem \ref{Deng}, there exists $ S\in\mathbb{B}(\mathscr{H})$ such that $ A=B+(I-B^*)S(I-B^*)$.\\
$ (\Longleftarrow)$: If there exists an idempotent $Y$ such that $ CC^*=B+Y $ with $ B^*Y=0 $ and $YB^*=0 $, then $ B\stackrel{*}{\leq}CC^* $. The assumption $ A=B+(I-B^*)S(I-B^*)$ and the fact that $B$ is an idempotent yield $B^*(A-B)=0 $ and $(A-B)B^*=0 $. Hence, $B\stackrel{*}{\leq}A $ and $B$ is a common $ *- $ lower bound of $A$ and $CC^* $.
\end{proof}

%%%%%%%%%%%%%%%%%%%%%%%%%%%%%%%%%%%%%%%%%%%%%%%%%%%%%%%%%%%%%

\end{document}